\newtheorem{theorem}{Theorem}[section]
\newtheorem{lemma}[theorem]{Lemma}
\newtheorem{corollary}[theorem]{Corollary}
\newtheorem{conjecture}[theorem]{Conjecture}
\newtheorem{definition}[theorem]{Definition}
\newtheorem{question}[theorem]{Question}
\newtheorem{remark}[theorem]{Remark}
\newcommand{\Aut}{{\rm Aut}}
\newcommand{\Sym}{\hbox{{\rm Sym}}}
\newcommand{\Cos}{\hbox{{\rm Cos}}}
\renewcommand{\phi}{\varphi}
\newcommand{\V}{{\rm V}}
\newcommand{\A}{{{\rm A}}}
\newcommand{\E}{{{\rm E}}}
\newcommand{\T}{{{\rm T}}}
\newcommand{\N}{{{\rm N}}}
\newcommand{\R}{{\mathcal{R}}}
\newcommand{\G}{{\mathcal{L}}}
\newcommand{\X}{{\mathcal{X}}}
\newcommand{\cT}{{\mathcal{T}}}
\newcommand{\la}{\langle}
\newcommand{\ra}{\rangle}
\newcommand{\AGL}{{\hbox{\rm AGL}}}
\newcommand{\C}{{\mathcal{C}}}
\newcommand{\rC}{{\rm{C}}}
\newcommand{\tX}{{\tilde{X}}}
\newcommand{\tG}{{\tilde{G}}}
\newcommand{\tA}{{\tilde{A}}}
\newcommand{\tGa}{{\tilde{\Gamma}}}
\newcommand{\tv}{{\tilde{v}}}
\begin{document}

\title[Linking ring structures]{Resolution of a conjecture about linking ring structures}

\author[Marston Conder]{Marston Conder}
\address{Marston Conder,\newline
Department of Mathematics, University of Auckland, Private bag 92019, Auckland 1142, New Zealand}
 \email{m.conder@auckland.ac.nz}

\author[Luke Morgan]{Luke Morgan}
\address{Luke Morgan,\newline
University of Primorska, UP FAMNIT, Glagolja\v{s}ka 8, 6000 Koper, Slovenia, and University of Primorska, UP IAM, Muzejski trg 2,  6000 Koper, Slovenia. \newline
\emph{Current affiliation}: Centre for the Mathematics of Symmetry and Computation, Department of Mathematics and Statistics, University of Western Australia, 35 Stirling Highway, Crawley, Western Australia 6009}
 \email{luke.morgan@uwa.edu.au}

\author[P.\ Poto\v{c}nik]{Primo\v{z} Poto\v{c}nik}
\address{Primo\v{z} Poto\v{c}nik,\newline
Faculty of Mathematics and Physics, University of Ljubljana,  Jadranska 19, SI-1000 Ljubljana, Slovenia;
\newline affiliated with: 
Institute of Mathematics, Physics and Mechanics,
 Jadranska 19, SI-1000 Ljubljana, Slovenia.
 } 
\email{primoz.potocnik@fmf.uni-lj.si}

\thanks{We are grateful to the referee for their suggestions and corrections. The first author is grateful to New Zealand's Marsden Fund for its support via project UOA2030. 
The  second author acknowledges the Australian Research Council grant DE160100081 and the Slovenian Research Agency research programme P1-0285 and research projects J1-1691, N1-0160, J1-2451, N1-0208. 
The third author gratefully acknowledges the support of Slovenian Research Agency, programme P1--0294 and research project N1-0126.}

\subjclass[2000]{20B25}
\keywords{tetravalent, vertex-transitive, graph, linking rings, amalgam}

\begin{abstract}
An LR-structure is a tetravalent vertex-transitive graph together with a special type of a decomposition of its
edge-set into cycles. LR-structures were introduced in a paper by P.~Poto\v{c}nik and S.~Wilson, titled `Linking rings structures and tetravalent semisymmetric graphs', in {\em Ars Math.\ Contemp.}~{\bf 7} (2014), as a tool to study tetravalent semisymmetric graphs of girth $4$.
In this paper, we use the methods of group amalgams to resolve some problems left open in the above-mentioned paper.
\end{abstract}

\maketitle


\section{Introduction}
\label{sec:intro}

This paper is about a combinatorially interesting class of
connected tetravalent vertex-transitive graphs, which can be characterised
by the existence of a particularly nice decomposition of the edges of the graph into cycles.
This class of graphs and the corresponding edge-decompositions, called
{\em LR-structures}, were introduced in \cite{LR1}
  and later studied in \cite{LR2,LR3}. In \cite{LR1} two 
intriguing problems were left open (see Question~\ref{q} and Conjecture~\ref{c})
which, even though entirely combinatorial in their nature, defied attempts 
to solve them with purely graph-theoretical tools.

The main aim of this paper is to provide a solution of each of these two problems. The approach we take 
is based on a study of $2$-arc-transitive groups of automorphisms of tetravalent graphs, the algebraic structure of which 
can be deduced from the works of Gardiner \cite{Gar}, Weiss \cite{weiss} and the third author \cite{Pot},  
and then exploited with the help of some computations. 

We begin with motivation that leads naturally to the notions of LR-groups and LR-structures.

First, consider the pairs $(\Gamma,G)$ where $\Gamma$ is a connected (but not necessarily finite) tetravalent 
graph and $G$ is a vertex-transitive group of automorphisms of $\Gamma$. A rough classification of these 
can be obtained by considering the permutation group $G_v^{\Gamma(v)}\!$ induced by the action of the vertex-stabiliser $G_v$ on
the neighbourhood $\Gamma(v)$ of a vertex $v$. 
 
 If $G_v^{\Gamma(v)}$ is transitive, then $G$ acts transitively on the set $\A(\Gamma)$ of arcs (ordered pairs of  adjacent vertices) of $\Gamma$.
Arc-transitive graphs have been studied extensively for decades, and many aspects of the tetravalent case are now well understood. For example, the structure of their automorphism groups is known \cite{D4,Pot}, a complete list of examples of order up to 640 has been constructed \cite{cubiccensus}, and several infinite families have been found and analysed \cite{GP}.
 
 The next natural case to consider is when the group $G_v^{\Gamma(v)}$ has two orbits.
 This case splits into a number of subcases.
 If one of the orbits of $G_v^{\Gamma(v)}$ has length $1$ (and the other has length $3$), then one can study the graph $\Gamma$
 through the associated $6$-valent $G$-arc-transitive graph $\Gamma'$ obtained by merging
 pairs $\{u,w\}$ of vertices of $\Gamma$ with the property that $G_v=G_w$. 
 A related approach 
  was used in \cite{cubiccensus} to study vertex- but not arc-transitive $3$-valent graphs
 using some theory of $4$-valent arc-transitive graphs.
 
A much more complex situation arises combinatorially if $G_v^{\Gamma(v)}$ has two orbits of length $2$. 
In this case $G_v^{\Gamma(v)}$ is permutation isomorphic to either the cyclic group $\rC_2 = \langle (1,2)(3,4)\rangle$ or the
Klein group $\V_4 = \langle (1,2), (3,4) \rangle$, in their
 respective intransitive actions on $4$ points. 
In these two cases the group $G$ has precisely two orbits on the arc-set $\A(\Gamma)$, but can have either one or two orbits on the edge-set 
$\E(\Gamma)$. 

If  $G$ has a single orbit on $\E(\Gamma)$, then the graph $\Gamma$ belongs to the widely-studied class of graphs admitting a half-arc-transitive group action,
 which has received much attention in the recent past; see \cite{PraSpiGlobal,ConPotSpa,MikSpaWil,HAT4,RivSpa,SpiHAT,SpiXiaHAT,Xia} for example.
On the other hand, the case where $G$ has two orbits on  $\E(\Gamma)$ has
received much less attention so far. Here the analysis can again be split into two subcases, depending on
whether $G_v^{\Gamma(v)}$ is isomorphic to $\rC_2$ or to $\V_4$. In the first subcase one can easily see that the vertex-stabiliser $G_v$ 
is itself
 isomorphic to $\rC_2$, 
which forces the group $G$ to be relatively small (indeed $|G| = 2|\V(\Gamma)|$, to be precise), which allows the use of a number of standard group-theoretical approaches. 

In this paper, we are interested in the remaining subcase, where $G_v^{\Gamma(v)} \cong \V_4$, and $G$ is intransitive on $\E(\Gamma)$. 
Accordingly, we are interested in the situation covered by the following definition:
 \begin{definition}
 \label{def:LRgroup}
Let $G$ be a vertex-transitive group of automorphisms of a connected tetravalent graph $\Gamma$ such that $G_v^{\Gamma(v)}$ is permutation isomorphic to the Klein group
$\V_4$ in its faithful intransitive action on four points. If $G$ has {\em two} orbits on $\E(\Gamma)$,
then we call the group $G$ an {\em LR-group} of automorphisms of $\Gamma$.

 For a subgroup $X$ of $ \Aut(\Gamma)$ that contains $G$, we say that $G$ is a \emph{maximal LR-subgroup} of  $X $ if   there is no LR-subgroup $Y$ of $X$ with $G < Y < X$. If $X=\Aut(\Gamma)$ we say that $G$ is a maximal LR-group of automorphisms of $\Gamma$.
%
 \end{definition}
%

LR-groups of automorphisms were first introduced in \cite{LR1}  via the notion of an {\em LR-structure}. Although the definition given in \cite{LR1} was for finite graphs only, our definition below works for finite \emph{and} infinite graphs, where in an infinite graph a `cycle' is understood to be a set of edges inducing a connected 2-regular subgraph.

\begin{definition}
\label{def:LR}
 Let $\Gamma$ be a connected tetravalent graph, let $\C$ be
 a partition of $E(\Gamma)$ into cycles, and let $\{\G,\R\}$ be a partition of $\C$ such
 that every vertex of $\Gamma$ is incident to one cycle in $\G$ and one cycle in $\R$.
Define
$$\Aut(\Gamma,\C) = \{ g\in \Aut(\Gamma) \mid \C^g = \C \} \quad \text{and} \quad \Aut^+(\Gamma,\C) = \{g\in \Aut(\Gamma,\C) \mid \G^g = \G \text{ and } \R^g=\R\}.$$
Then the pair $(\Gamma,\C)$ is called an LR-structure, and $\C$ is called an LR-decomposition of $\Gamma$, provided that 
 \begin{enumerate}[leftmargin=*]
 \setlength{\itemsep}{2pt}
  \item[{\rm (a)}] the group $\Aut^+(\Gamma,\C)$ acts transitively on $\V(\Gamma)$, and
  \item[{\rm (b)}] for every $v\in \V(\Gamma)$ and for every cycle $C\in \C$ passing through $v$,
  some $g\in \Aut^+(\Gamma,\C)_v$ acts as a reflection on $C$ and fixes every vertex of the other cycle in $\C$ passing through $v$.
 \end{enumerate}
 An LR-structure $(\Gamma,\C)$ is called {\em self-dual} provided that 
 $\Aut^+(\Gamma,\C)$ is a proper subgroup of  $\Aut(\Gamma,\C)$ and is
  {\em non-self-dual} if $\Aut^+(\Gamma,\C)=
 \Aut(\Gamma,\C)$.
\end{definition}

\begin{remark}
\label{rem:LR2}
{\em 
If there exists a partition 
 $\{\G,\R\}$ of $\C$ satisfying the conditions of the above definition, then by connectedness of $\Gamma$ it is unique, and so $\Aut^+(\Gamma,\C)$ is well-defined. 
 Furthermore, observe that the connectedness of $\Gamma$ implies also that every element
 of $\Aut(\Gamma,\C)$ either preserves each of the sets $\G$ and $\R$ (and hence belongs to $\Aut^+(\Gamma,\C)$), or takes cycles in $\G$ to cycles in $\R$ and vice-versa. In particular, 
 the index of $\Aut^+(\Gamma,\C)$ in $\Aut(\Gamma,\C)$ is at most $2$, and so an LR-structure $(\Gamma,\C)$ is self-dual if and only if there exists some $g \in \Aut(\Gamma,\C)$ such that $\G^g = \R$ and $\R^g = \G$. 
 }
\end{remark}

\begin{remark}
\label{rem:LR}
{\em 
Note that condition (b) of Definition~\ref{def:LR} implies that the  permutation  group
$\Aut^+(\Gamma,\C)_v^{\Gamma(v)}$ 
contains the involutions 
$x:=(w_1\, w_2)$ and $y:=(u_1\, u_2)$
 swapping the neighbours $w_1$ and $w_2$ of $v$ along the unique cycle in $\G$ passing through $v$, 
and swapping the neighbours $u_1$ and $u_2$ of $v$ along the unique cycle in $\R$ passing through $v$.
In particular, $\Aut^+(\Gamma,\C)_v^{\Gamma(v)}$ contains the Klein $4$-group $\langle x, y\rangle$. 
But furthermore, since by definition there is no element in $\Aut^+(\Gamma,\C)$ mapping
a cycle from $\G$ to a cycle to $\R$, we see that $\{w_1,w_2\}$ and $\{u_1,u_2\}$ are orbits
of $\Aut^+(\Gamma,\C)_v^{\Gamma(v)}$, and therefore $\Aut^+(\Gamma,\C)_v^{\Gamma(v)} = \langle x, y\rangle$.
}
\end{remark}

\begin{remark}
\label{rem:LRat}
{\em 
If  $(\Gamma,\C)$ is an LR-structure, and $G:=\Aut^+(\Gamma,\C)$, and $C$ is a cycle in $\C$, then every element $g\in G$ taking an edge of $C$ to an edge of $C$ must preserve the cycle $C$ setwise. Moreover, by vertex-transitivity of $G$ and the existence of automorphisms guaranteed by condition (b) of Definition~\ref{def:LR}, the setwise stabiliser of $C$ in $G$
acts as the full automorphism group of the cycle $C$, and in particular, induces an arc-transitive group on $C$. Then since all cycles in $\G$ lie in the same orbit of $G$, 
as do all the cycles in $\R$, we see that $G$ has precisely two orbits on $A(\Gamma)$: one consists of the arcs of the cycles in $\G$, the other consists of the arcs of the cycles in $\R$.
}
\end{remark}

In Section~\ref{sec:prelim} we will show that the notions of LR-group and LR-structure are equivalent in some sense. 
More precisely, as stated in Lemma~\ref{lem:equiv}, every LR-group of automorphisms $G$
determines a unique LR-structure $(\Gamma,\C)$ for which $G\le \Aut^+(\Gamma,\C)$,
and conversely, if $(\Gamma,\C)$ is an LR-structure then $\Aut^+(\Gamma,\C)$
is a maximal LR-group of automorphisms of $\Gamma$.
\smallskip


There is no obvious reason why one would expect an LR-structure on a given graph to be unique (up to the action of the automorphism group of a graph).
The lack of examples of graphs admitting several non-equivalent LR-structures, however, encouraged the authors of \cite{LR1} to pose the following:

\begin{question} {\rm \cite[Question 1]{LR1}}
\label{q}
If $\C$ and $\C'$ are two distinct LR-decompositions of a finite tetravalent graph $\Gamma$,
 is it true that there exists $g\in \Aut(\Gamma)$ such that $\C^g = \C' {\hskip 1pt}?$ 
\end{question}

For an LR-structure $(\Gamma,\C)$ to be self-dual, it is necessary that the cycles in $\G$ and in $\R$ have the same length. Remarkably, it was shown in \cite[Theorem 8.2]{LR1} that this necessary condition holds provided that $\Aut(\Gamma) \neq \Aut(\Gamma,\C)$. Under the same hypothesis, the authors of \cite{LR1} conjectured that the necessary condition is \emph{also} sufficient:

\begin{conjecture} {\rm \cite[Conjecture 8.1]{LR1}}
\label{c}
If $(\Gamma,\C)$ is a finite LR-structure for which $\Aut^+(\Gamma, \C)$ is a proper subgroup of $\Aut(\Gamma)$, then $(\Gamma,\C)$ is self-dual.
\end{conjecture}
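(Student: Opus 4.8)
Throughout put $G=\Aut^+(\Gamma,\C)$ and $A=\Aut(\Gamma)$, so that the hypothesis is $G<A$. Since a colour class of $\C$ is just a connected component of one of the two monochromatic subgraphs, any automorphism interchanging the two colours automatically lies in $\Aut(\Gamma,\C)$ and swaps $\G$ with $\R$; hence by Remark~\ref{rem:LR2} it suffices to produce a \emph{single} colour-swapping automorphism of $\Gamma$. The first step is to recast this group-theoretically. By Remark~\ref{rem:LRat} the group $G$ has exactly two orbits on arcs, the arcs of the green cycles and the arcs of the red cycles, and these determine the edge-colouring and hence $\C$; consequently $\C$ is recoverable from $G$ and one checks that $N_A(G)=\Aut(\Gamma,\C)$. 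Any $\tau\in N_A(G)\setminus G$ must permute the two arc-orbits of $G$, and cannot fix them (that would make $\tau$ colour-preserving, so $\tau\in G$), hence $\tau$ swaps the colours. Thus the conjecture is equivalent to the assertion that $G<A$ forces $N_A(G)>G$, i.e.\ that $G$ is not self-normalising in $A$.

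Next I would pin down the local action. We have $A_v^{\Gamma(v)}\ge G_v^{\Gamma(v)}=\V_4=\langle(1\,2),(3\,4)\rangle$, and the only subgroups of $S_4$ containing this $\V_4$ are $\V_4$, the dihedral group $D_4=\langle(1\,2),(3\,4),(1\,3)(2\,4)\rangle$, and $S_4$. If $A_v^{\Gamma(v)}=\V_4$ then the two colour classes are the $A_v$-orbits on $\Gamma(v)$, so $A$ preserves the colouring and $A=G$, contrary to hypothesis. If $A_v^{\Gamma(v)}=D_4$ then the colour-pairing is the \emph{unique} $A_v$-invariant partition of $\Gamma(v)$ into two pairs; since each $g\in A$ carries the unique $A_v$-invariant pairing to the unique $A_{gv}$-invariant pairing, $A$ preserves $\C$, whence $A=\Aut(\Gamma,\C)>G$ and we are done. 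This reduces everything to the case $A_v^{\Gamma(v)}=S_4$.

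In that remaining case $A$ is $2$-arc-transitive with local group $S_4$, so by the results of Gardiner, Weiss and \cite{Pot} the amalgam $(A_v,A_{\{v,w\}})$ is one of a short explicit list, the vertex-stabiliser kernel $A_v^{[1]}$ being a $2$-group of bounded order. Moreover $\Aut(\Gamma,\C)_v^{\Gamma(v)}\le D_4<S_4=A_v^{\Gamma(v)}$, so $A\ne\Aut(\Gamma,\C)$ and \cite[Theorem~8.2]{LR1} applies: the cycles in $\G$ and in $\R$ all share a common length, which is the symmetry between the colours that a swap must realise. The plan is then, for each amalgam on the list, to exhibit an element $\tau\in A_v$ with $\tau^{\Gamma(v)}=(1\,3)(2\,4)$ that normalises $G$. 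Such a $\tau$ swaps the green and red blocks of the pairing at $v$, lies in $N_A(G)\setminus G$ (as $\tau^{\Gamma(v)}\notin\V_4$), and so furnishes the desired colour swap. Concretely the candidates form a single coset of $A_v^{[1]}$; one locates $G_v$ inside $A_v$ and verifies, within the amalgam — equivalently in the universal group acting on the $4$-valent tree and then descending to $\Gamma$ — that some candidate conjugates the green edge-stabiliser of $G$ to the red one and vice versa.

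The crux, and the expected main obstacle, is precisely this last verification in the $S_4$ case. The swap $(1\,3)(2\,4)$ is always available in $A_v^{\Gamma(v)}$, but because $S_4$ does not preserve the colour-pairing there is no \emph{a priori} reason for it to be realised by an automorphism that respects $\C$ globally; the existence of a suitable $\tau$ depends on how the swap interacts with the kernel $A_v^{[1]}$, and this is exactly what the case-by-case computation must settle. I expect the amalgams with the largest stabilisers to be the most delicate, since there the coset of candidates is largest and the requirement of normalising $G$ most restrictive; showing that a valid $\tau$ nevertheless exists for every amalgam on the list is the computational heart of the argument.
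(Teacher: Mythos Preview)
Your overall plan is the paper's: recast self-duality as the existence of an index-$2$ overgroup of $G=\Aut^+(\Gamma,\C)$, reduce to the case where the local action is $\Sym(4)$, and then run through the finite list of amalgams for discrete $2$-arc-transitive tetravalent groups, verifying in each that a maximal LR-subgroup sits with index $2$ in its normaliser. The paper packages the reduction a little differently --- rather than your trichotomy on $A_v^{\Gamma(v)}$ with $A=\Aut(\Gamma)$, it takes $h\in\Aut(\Gamma)\setminus\Aut(\Gamma,\C)$ in a putative counterexample and shows (Lemma~\ref{lem:S4}) that $\langle G,G^h\rangle$ is already $2$-arc-transitive with local group $\Sym(4)$, thereby landing directly in your hard case --- but the substantive computation (your $S_4$ case, the paper's Theorem~\ref{the:main} and Lemma~\ref{lem:detail}) is the same, including the lift to the tree and the amalgam-by-amalgam check.

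One step in your write-up is wrong as stated. In the $\V_4$ case you say ``$A$ preserves the colouring and $A=G$'', but from $A_v^{\Gamma(v)}=\V_4$ you only get that $A$ preserves the \emph{pair} of colour classes, i.e.\ $A\le\Aut(\Gamma,\C)$; nothing prevents an element of $A$ from swapping green and red, so $A\le G$ does not follow. The case is nonetheless immediate: either observe that $G<A\le\Aut(\Gamma,\C)$ already gives $\Aut^+(\Gamma,\C)<\Aut(\Gamma,\C)$ and hence self-duality, or invoke Corollary~\ref{cor:max} to see that $G<A$ forces $A$ to be arc-transitive, so $A_v^{\Gamma(v)}$ can never be the intransitive $\V_4$ in the first place. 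Your $D_4$ argument is fine, and is a pleasant shortcut that the paper's route bypasses. For the $S_4$ amalgam work, note that locating $G_v$ inside $A_v$ is not enough to pin down $G$; the paper enumerates candidate LR-subgroups of the universal completion (via low-index subgroup search and Lemma~\ref{lem:generate}) before computing normalisers, and in the $7$-arc-transitive amalgam there turns out to be \emph{no} LR-subgroup at all, so that case is vacuous rather than requiring a $\tau$.
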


The aim of this paper is to resolve the status of both the  question and the conjecture above.

\begin{theorem}
\label{the:solution}
The answer to Question~\ref{q} is affirmative, and Conjecture~\ref{c} is correct.
\end{theorem}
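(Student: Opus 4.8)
The plan is to recast both statements as questions about the subgroup lattice of $A := \Aut(\Gamma)$ and then to exploit the extremely limited local structure available to a tetravalent graph. For an LR-decomposition $\C$ write $G := \Aut^+(\Gamma,\C)$, the associated maximal LR-group (Lemma~\ref{lem:equiv}). The first point I would record is that $\C$ is recoverable from $G$ alone: at each vertex $v$ the two $\C$-cycles through $v$ are encoded by the splitting of $\Gamma(v)$ into the two orbits of $G_v^{\Gamma(v)} = \V_4$, so $\C \mapsto G$ is an $A$-equivariant bijection between LR-decompositions and maximal LR-groups. Hence two distinct decompositions $\C,\C'$ satisfy $\C^g = \C'$ for some $g$ if and only if their groups satisfy $G^g = G'$, and Question~\ref{q} is equivalent to the assertion that all maximal LR-groups of $\Gamma$ are conjugate in $A$. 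The same encoding yields $\Aut(\Gamma,\C) = N_A(G)$: any $n \in N_A(G)$ has $nG_vn^{-1} = G\cap A_{nv} = G_{nv}$ and so carries the orbit-partition at $v$ to that at $nv$, preserving $\C$, while conversely every element of $\Aut(\Gamma,\C)$ normalises $G = \Aut^+(\Gamma,\C)$ directly from the definitions (Remark~\ref{rem:LR2}). Since $[\Aut(\Gamma,\C):\Aut^+(\Gamma,\C)] \le 2$, this identifies self-duality of $(\Gamma,\C)$ with the single condition $N_A(G) > G$.

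I would then determine the possible local groups. Because $\V_4 = G_v^{\Gamma(v)} \le A_v^{\Gamma(v)}$ and the only subgroups of $\Sym(4)$ containing the intransitive $\V_4 = \langle (1\,2),(3\,4)\rangle$ are $\V_4$, the dihedral group $\Dih_4$ of order $8$, and $\Sym(4)$, these are the only options for $A_v^{\Gamma(v)}$. If $A_v^{\Gamma(v)}$ is $\V_4$ or $\Dih_4$, then the pair-partition of $\Gamma(v)$ determined by $\C$ is the unique $A_v^{\Gamma(v)}$-invariant partition of $\Gamma(v)$ into two pairs, hence is preserved by all of $A_v$ and, by vertex-transitivity, by all of $A$; thus $A = \Aut(\Gamma,\C) = N_A(G)$ and $\C$ is the only LR-decomposition of $\Gamma$. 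This makes Question~\ref{q} vacuous in these cases, and whenever $G < A$ it gives $G = \Aut^+(\Gamma,\C) < \Aut(\Gamma,\C) = A$, so $(\Gamma,\C)$ is self-dual, proving Conjecture~\ref{c} here. Everything therefore reduces to the case $A_v^{\Gamma(v)} = \Sym(4)$, in which $A$ is $2$-arc-transitive, $G < A$ automatically, and the three non-normal Klein four-subgroups of $\Sym(4)$ provide three distinct candidate pairings at $v$ that $\Sym(4)$ fuses.

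For $A_v^{\Gamma(v)} = \Sym(4)$ I would bring in the structure theory of $2$-arc-transitive tetravalent graphs: by Gardiner~\cite{Gar}, Weiss~\cite{weiss} and~\cite{Pot} the stabiliser $A_v$ is bounded, and the amalgam $(A_v,A_e;A_{ve})$ of vertex-, edge- and arc-stabilisers is one of finitely many finite faithful amalgams of index $(4,2)$, all known explicitly. Working in the universal completion acting on the tree, I would, for each amalgam on the list, enumerate the subgroups $G_v \le A_v$ with $G_v^{\Gamma(v)} = \V_4$ and assemble from them the vertex-transitive LR-subgroups $G$ having two edge-orbits. Two things must then be verified by computation inside each amalgam: (i) that all the resulting LR-subgroups lie in a single $A$-conjugacy class, which settles Question~\ref{q}; and (ii) that each LR-subgroup $G$ is properly normalised, the element of $N_A(G)\setminus G$ being one inducing the pairing-swapping involution $(1\,3)(2\,4) \in N_{\Sym(4)}(\V_4) = \Dih_4$ at $v$ and hence interchanging the colour classes $\G$ and $\R$, which settles Conjecture~\ref{c}.

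The crux is step (ii) in the $\Sym(4)$ case. The local normaliser $N_{\Sym(4)}(\V_4) = \Dih_4$ always supplies the desired colour-swap \emph{at one vertex}, but promoting it to a bona fide automorphism of $(\Gamma,\C)$ — equivalently, showing that an LR-group is never self-normalising in $A$ — is a global statement that cannot be read off a single stabiliser. The value of the amalgam method is precisely that this promotion, and the fusion underlying (i), can be decided inside the finitely many amalgams rather than across the infinitely many graphs $\Gamma$: one checks that the required fusing and normalising elements already occur as automorphisms of the amalgam, so that they descend to every finite completion, and in particular to $\Gamma$. Carrying out these finite checks for each amalgam on the list — and confirming that none of them admits a non-self-dual LR-subgroup — is where the computational work is concentrated, and I expect it to be the main difficulty of the argument.
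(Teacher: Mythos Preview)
Your proposal is correct and follows essentially the same route as the paper: reduce both the question and the conjecture to statements about maximal LR-subgroups of a $2$-arc-transitive group $A$ with $A_v^{\Gamma(v)}\cong\Sym(4)$, and then settle those statements by a case-by-case computation in the finitely many tetravalent $2$-arc-transitive amalgams of Gardiner--Weiss--Poto\v{c}nik (carried out on the universal completion acting on $\T_4$, which is what you describe). Your identifications $\C\leftrightarrow G$ and $\Aut(\Gamma,\C)=N_A(G)$ are exactly what the paper establishes via Lemma~\ref{lem:equiv} and Lemma~\ref{lem:index2}, and your expected computational content is Lemma~\ref{lem:detail}.

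The only organisational difference is that you frame the reduction as a trichotomy on $\Aut(\Gamma)_v^{\Gamma(v)}\in\{\V_4,\Dih_4,\Sym(4)\}$ and dispose of the first two cases by the uniqueness of the invariant pair-partition, whereas the paper bypasses this by taking $A=\langle G,G'\rangle$ (or $\langle G,G^h\rangle$) and invoking Lemma~\ref{lem:S4} to land directly in the $\Sym(4)$ case. Both routes are valid; the paper's is slightly more economical since Lemma~\ref{lem:S4} makes your $\V_4/\Dih_4$ analysis unnecessary, but your version has the merit of showing explicitly why those local types cannot support competing LR-structures.
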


Our approach to proving Theorem~\ref{the:solution} is based on the following observation. If  $(\Gamma,\C)$ is a non-self-dual LR-structure 
for which $\Aut^+(\Gamma, \C)$ is a proper subgroup of $\Aut(\Gamma)$, then 
there exists a second cycle decomposition $\C'$ for which $(\Gamma,\C')$ is an LR-structure different from $(\Gamma,\C)$;  
indeed $\C^g$ is such a cycle decomposition whenever $g\in \Aut(\Gamma) \setminus \Aut(\Gamma, \C)$. 
 Hence both
Question~\ref{q} and Conjecture~\ref{c} concern the situation where $\Gamma$ admits two distinct LR-structures. 

On the other hand, we prove in Lemma~\ref{lem:S4} that this situation forces the graph $\Gamma$ to be $2$-arc-transitive.  
(Recall that a $2$-arc in a graph is a walk $(u,v,w)$ of length $2$ such that $u \not = w$, and then a vertex-transitive graph $\Gamma$ 
is $2$-arc-transitive if $\Aut(\Gamma)$ acts transitively on the set of $2$-arcs of $\Gamma$, 
which is equivalent to requiring that $\Aut(\Gamma)_v$ acts doubly transitively on $\Gamma(v)$.) 
This allows us to employ structural theory of $2$-arc-transitive groups of automorphisms of tetravalent graphs.
In particular, we will deduce Theorem~\ref{the:solution} in Section~\ref{sec:proof} from the following theorem, which we will prove in Section~\ref{sec:2AT}. 
For this, we note that a group of automorphisms $G$ of a graph $\Gamma$ is called {\em discrete} if the stabiliser $G_v$ is finite 
for every vertex $v\in\V(\Gamma)$; see  \cite{cameron}, for example.

\begin{theorem}
\label{the:main}
Let $A$ be a discrete $2$-arc-transitive group of automorphisms of a connected tetravalent graph $\Gamma$ such that $A_v^{\Gamma(v)} \cong \Sym(4)$. If $A$ contains an LR-subgroup, then $A$ cannot act transitively on the $5$-arcs of $\Gamma$. 
Also, if $G$ is a maximal LR-subgroup of $A$, then there exists an arc-transitive subgroup  $X$ of $A$ containing $G$ as a subgroup of index $2$, 
and every maximal LR-subgroup of $A$ is conjugate to $G$. 
\end{theorem}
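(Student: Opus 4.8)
The plan is to convert the entire statement into a finite case-check by invoking the classification of the relevant amalgams, and then to analyse the $2$-local structure in each case. Since $A$ is discrete, every vertex-stabiliser $A_v$ is finite, and since $A$ is $2$-arc-transitive with $A_v^{\Gamma(v)}\cong\Sym(4)$, the pair $(A_v,A_{\{u,v\}})$ consisting of a vertex-stabiliser and an edge-stabiliser forms a finite faithful \emph{amalgam of index $(4,2)$} in the sense of Gardiner, Weiss and the third author. By the classification of such amalgams \cite{Gar,weiss,Pot} there are only finitely many possibilities for its isomorphism type; they are indexed by the largest integer $s$ for which $A$ is $s$-arc-transitive, and $2\le s\le 7$ by Weiss's theorem. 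In every case the kernel $A_v^{[1]}$ of the action $A_v\to A_v^{\Gamma(v)}$ is a $2$-group, so $A_v$ is a $\{2,3\}$-group with $|A_v|=24\,|A_v^{[1]}|$. I would carry the three assertions through this list.

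For the first assertion I would prove the contrapositive: if $A$ is $5$-arc-transitive then $A$ has no LR-subgroup. Let $G\le A$ be an LR-subgroup, so $G$ is vertex-transitive with $G_v^{\Gamma(v)}$ equal to an intransitive Klein four-group $\langle(1\,2),(3\,4)\rangle$, and $G$ has two orbits on $\E(\Gamma)$, equivalently (by Remark~\ref{rem:LRat}) two orbits on $\A(\Gamma)$. Then $G_v=G\cap A_v$ is a $2$-group inducing this $\V_4$, hence lies in a Sylow $2$-subgroup of $A_v$, and $G$ is pinned down inside $A$ by its $2$-local data along an edge. The key point is that for the large amalgams ($s\ge 5$) the $\FF_2$-module structure of $A_v^{[1]}$ under the local $\Sym(4)$ is rigid enough that any vertex-transitive $2$-subgroup inducing $\V_4$ on $\Gamma(v)$ is forced to be transitive on $\E(\Gamma)$, i.e.\ to be half-arc-transitive rather than an LR-group. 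Establishing this — uniformly, or by inspection of the finitely many amalgams with $s\in\{5,6,7\}$ — is where I expect the main work (and some computation) to lie.

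For the second assertion, let $G$ be a maximal LR-subgroup, and let $(\Gamma,\C)$ be the LR-structure it determines via Lemma~\ref{lem:equiv}, so $G\le\Aut^+(\Gamma,\C)$. I would first note that $A\cap\Aut^+(\Gamma,\C)$ is again an LR-subgroup of $A$ containing $G$: it is vertex-transitive, induces $\V_4$ on each neighbourhood by Remark~\ref{rem:LR}, and has two edge-orbits; so maximality forces $G=A\cap\Aut^+(\Gamma,\C)$. Setting $X:=A\cap\Aut(\Gamma,\C)$, Remark~\ref{rem:LR2} gives $[X:G]\le 2$, and the substance is to show the index is exactly $2$, i.e.\ that $A$ contains an element preserving $\C$ and interchanging the colour-classes $\G$ and $\R$. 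Composing such an element with a suitable member of $G$ (using vertex-transitivity) one may take it to lie in $A_v$; it then induces on $\Gamma(v)$ an element of $N_{\Sym(4)}(\V_4)=D_8$ outside $\V_4$, whence $X_v^{\Gamma(v)}=D_8$ is transitive, $X$ is arc-transitive, $X_v^{[1]}=G_v^{[1]}$, and $[X:G]=[D_8:\V_4]=2$. The existence of this colour-swapping element is read off from the (now short, since $s\le 4$) list of amalgams.

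For the third assertion, let $G,G'$ be maximal LR-subgroups of $A$. Their local groups are intransitive Klein four-subgroups of $\Sym(4)$; there are exactly three of these, one for each pairing $12|34$, $13|24$, $14|23$, and they form a single conjugacy class in $\Sym(4)=A_v^{\Gamma(v)}$. Using vertex-transitivity of $A$ and then an element of $A_v$ inducing the required permutation of $\Gamma(v)$, I may assume $G_v^{\Gamma(v)}=(G')_v^{\Gamma(v)}$ is the same $\V_4$, and it remains to show that a maximal LR-subgroup is determined up to conjugacy by this local datum — equivalently, that the $\V_4$-local sub-amalgam of the index-$(4,2)$ amalgam of $A$ is unique up to conjugacy — which is checked case by case. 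The hard part throughout is the first assertion: proving that the $2$-local structure of every $5$-arc-transitive amalgam obstructs the \emph{second} edge-orbit distinguishing an LR-group from a half-arc-transitive group; once $s\le 4$ is secured, the remaining two assertions reduce to bookkeeping over a handful of explicit amalgams.
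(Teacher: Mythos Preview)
Your plan coincides with the paper's: reduce to the finite list of index-$(4,2)$ amalgams with local group $\Sym(4)$ and check each case; the paper makes the passage to the universal completion explicit via a separate reduction-to-trees lemma (Lemma~\ref{eq:1}), so that $A$ is literally one of six concretely presented groups acting on $\T_4$, and then carries out the case analysis (Lemma~\ref{lem:detail}) computationally, using {\sc Magma}'s \texttt{LowIndexSubgroups} for the five types with $s\le 4$ and identifying your overgroup $X=A\cap\Aut(\Gamma,\C)$ as $\N_A(G)$. Two small corrections: there is only \emph{one} amalgam here with $s\ge 5$, namely the $7$-AT type, so no $s\in\{5,6\}$ cases arise; and the paper disposes of that case not by an $\FF_2$-module rigidity argument forcing edge-transitivity, but by pinning down $G_v$, $G_{vu}$ and $G_{\{v,u\}}$ explicitly and then invoking Lemma~\ref{lem:generate} to show the putative LR-subgroup $\langle G_v,a,a^h\rangle$ is all of $A$.
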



In view of the correspondence between LR-structures and maximal LR-groups of automorphisms,
the above theorem has the following corollary.

\begin{corollary}
\label{cor:cor}
If $(\Gamma,\C)$ is an LR-structure such
that $\Aut^+(\Gamma,\C)$ is contained in a discrete arc-transitive group of automorphisms
of $\Gamma$, then $(\Gamma,\C)$ is self-dual.
\end{corollary}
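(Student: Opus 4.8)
The plan is to derive Corollary~\ref{cor:cor} directly from Theorem~\ref{the:main} by translating the hypothesis on the LR-structure into the hypothesis on the abstract group needed to invoke the theorem. Let $(\Gamma,\C)$ be an LR-structure, and set $G:=\Aut^+(\Gamma,\C)$. By the correspondence recorded in Lemma~\ref{lem:equiv}, $G$ is a maximal LR-group of automorphisms of $\Gamma$, so in particular $G$ is an LR-group in the sense of Definition~\ref{def:LRgroup}. The standing assumption is that $G$ is contained in some discrete arc-transitive group of automorphisms of $\Gamma$; call a minimal such group $A$ (or simply take $A$ to be the given discrete arc-transitive group). I want to bring Theorem~\ref{the:main} to bear on the pair $(A,\Gamma)$, after which self-duality of $(\Gamma,\C)$ should drop out.

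The first genuine step is to verify that $A$ satisfies the two hypotheses of Theorem~\ref{the:main}, namely that $A$ is $2$-arc-transitive with $A_v^{\Gamma(v)}\cong\Sym(4)$. Here I would use the earlier Lemma~\ref{lem:S4}: since $\Gamma$ carries the LR-structure $(\Gamma,\C)$ and is contained in an arc-transitive group, the presence of two distinct LR-structures (obtained by pushing $\C$ around by an arc-transitive $g\notin\Aut(\Gamma,\C)$, exactly as in the discussion preceding the statement of Theorem~\ref{the:solution}) forces $\Gamma$ to be $2$-arc-transitive, and hence $A_v^{\Gamma(v)}$ is doubly transitive on the four neighbours. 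A doubly transitive subgroup of $\Sym(4)$ containing the Klein four-group $\langle x,y\rangle$ from Remark~\ref{rem:LR} is either $\Alt(4)$ or $\Sym(4)$; I would argue that since $A$ is arc-transitive and $G$ has even index behaviour forcing the full symmetric action, $A_v^{\Gamma(v)}\cong\Sym(4)$. (If the structural theory only yields $\Alt(4)$ in some case, one passes to a suitable overgroup or observes that arc-transitivity together with the involutions reversing both the $\G$- and $\R$-cycles generates the transposition needed for $\Sym(4)$.)

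With both hypotheses of Theorem~\ref{the:main} in hand, I apply its conclusion. The theorem guarantees that the maximal LR-subgroup $G$ of $A$ is contained in an arc-transitive subgroup $X$ of $A$ with $[X:G]=2$. Now I translate back: an element $g\in X\setminus G$ is an automorphism of $\Gamma$ preserving the cycle decomposition $\C$ (since $X$ normalises the LR-structure associated to $G$, as $G\trianglelefteq X$ of index $2$ and the LR-structure is canonically determined by $G$ via Lemma~\ref{lem:equiv}), but $g\notin\Aut^+(\Gamma,\C)$ because $g\notin G$. By Remark~\ref{rem:LR2}, such a $g$ must interchange the two classes $\G$ and $\R$, i.e.\ $\G^g=\R$ and $\R^g=\G$, which is precisely the condition for $(\Gamma,\C)$ to be self-dual as recorded in that remark. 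Thus $(\Gamma,\C)$ is self-dual, proving the corollary.

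The main obstacle I anticipate is the bookkeeping in the penultimate step: one must be careful that the index-$2$ arc-transitive overgroup $X$ produced by Theorem~\ref{the:main} really does stabilise the cycle partition $\C$ (so that $X\le\Aut(\Gamma,\C)$) rather than merely normalising $G$ abstractly. This is where the canonicity asserted in Lemma~\ref{lem:equiv} is essential: because $(\Gamma,\C)$ is the \emph{unique} LR-structure determined by $G$, any automorphism normalising $G$ must fix the pair $\{\G,\R\}$ setwise and hence lie in $\Aut(\Gamma,\C)$. Granting that, the element of $X\setminus G$ swaps $\G$ and $\R$ by Remark~\ref{rem:LR2}, delivering self-duality. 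The remaining points—identifying $A_v^{\Gamma(v)}$ with $\Sym(4)$ and invoking $2$-arc-transitivity—are routine consequences of the earlier lemmas.
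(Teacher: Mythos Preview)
Your overall strategy matches the paper's, but there is a genuine gap: you tacitly assume that the given discrete arc-transitive group $A$ contains some $g\notin\Aut(\Gamma,\C)$, so that $\C^g\neq\C$ and Lemma~\ref{lem:S4} applies. Nothing rules out $A\le\Aut(\Gamma,\C)$. Indeed, since the uniqueness in Lemma~\ref{lem:equiv} gives $N_{\Aut(\Gamma)}(G)=\Aut(\Gamma,\C)$, this is exactly the case $G\trianglelefteq A$, and it certainly occurs (e.g.\ take $A=\Aut(\Gamma,\C)$ whenever $(\Gamma,\C)$ happens to be self-dual). In that case your construction of a second LR-decomposition fails, Lemma~\ref{lem:S4} is unavailable, $A$ need not be $2$-arc-transitive at all, and Theorem~\ref{the:main} cannot be invoked. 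The paper handles this by an explicit case split: if $G\trianglelefteq Y$ (their name for your $A$), then $Y$ permutes the two $G$-edge-orbits, and edge-transitivity of $Y$ forces some element to swap $\G$ and $\R$, giving self-duality immediately; only when $G$ is not normal in $Y$ does one pick $y\in Y\setminus N_Y(G)$, obtain $\C^y\neq\C$, and pass through Lemma~\ref{lem:S4} and Theorem~\ref{the:main}. You need this first case.

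A smaller point: your hedging about $A_v^{\Gamma(v)}$ possibly being $\Alt(4)$ is unnecessary and signals that you are applying Lemma~\ref{lem:S4} imprecisely. That lemma already yields $\langle G,G^g\rangle_v^{\Gamma(v)}\cong\Sym(4)$ on the nose, because two \emph{distinct} intransitive Klein four-subgroups of $\Sym(4)$ always generate $\Sym(4)$. You should apply Theorem~\ref{the:main} to the discrete subgroup $A':=\langle G,G^g\rangle\le A$ rather than to $A$ itself (or at least note that $A'_v^{\Gamma(v)}=\Sym(4)$ forces $A_v^{\Gamma(v)}=\Sym(4)$); the ``even index behaviour'' argument is not needed. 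Your final step, deducing self-duality from the index-$2$ overgroup $X$ via the uniqueness in Lemma~\ref{lem:equiv} and Remark~\ref{rem:LR2}, is correct and is essentially the implication (ii)$\Rightarrow$(i) of Lemma~\ref{lem:index2}.
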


It is not known to us whether Theorem~\ref{the:main} remains valid if the condition on discreteness of the $2$-arc-transitive group $A$ is dropped. 
In any case, as our approach depends heavily on the classification of the discrete $2$-arc-transitive groups of
automorphisms of tetravalent graphs, an entirely different method 
may need
 to be used to analyse the more general situation.

\section{Additional observations}
\label{sec:prelim}

The following lemma is a variation of the well-known fact  that an arc-transitive group $G$
of automorphisms of a connected graph $\Gamma$ is generated by the stabiliser $G_v$ of a vertex $v$ and an element $g$ that reverses an arc incident with $v$. 
Its proof can be derived from the proof of the more general phenomenon of generation of a group of graph automorphisms, as in \cite[Theorem 34]{GenCov};  
but for the sake of completeness, we give an independent proof for our specific context.

\begin{lemma}
\label{lem:generate}
Let $G$ be an LR-group of automorphisms of a connected tetravalent graph $\Gamma$,
let $v$ be a vertex of $\Gamma$, and let $u$ and $w$ be two neighbours of $\Gamma$
belonging to distinct orbits of $G_v$.  If $a$ and $b$ are elements
of $G_{\{v,u\}}{\setminus} G_{vu}$  and $G_{\{v,w\}}{\setminus} G_{vw}$ respectively, 
then $G=\langle G_v, a,b\rangle$.
\end{lemma}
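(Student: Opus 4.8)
The statement to prove is Lemma~\ref{lem:generate}: that an LR-group $G$ is generated by a vertex-stabiliser $G_v$ together with two arc-reversing elements $a,b$, one for an edge toward each of the two $G_v$-orbits on $\Gamma(v)$.

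The plan is to set $H := \langle G_v, a, b \rangle$ and show $H = G$ by a connectivity argument. Since $\Gamma$ is connected and $G$ is vertex-transitive, it suffices to prove that $H$ is itself vertex-transitive on $\Gamma$; once we know this, a standard orbit–stabiliser comparison closes the gap, because $H \le G$ and $H$ contains the full stabiliser $G_v$, so the orbit of $v$ under $H$ has the same size as its orbit under $G$, namely all of $\V(\Gamma)$, whence $|H| = |H_v|\,|\V(\Gamma)| \ge |G_v|\,|\V(\Gamma)| = |G|$ and therefore $H = G$. (In the infinite case one argues directly that $H_v = G_v$, which holds because $G_v \le H$ forces $H_v \ge G_v$ and $H \le G$ forces $H_v \le G_v$.)

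So the heart of the matter is vertex-transitivity of $H$. Here is the key observation: by Remark~\ref{rem:LR}, the induced group $G_v^{\Gamma(v)}$ is the Klein four-group $\langle x, y\rangle$, and its two orbits on $\Gamma(v)$ are exactly the two pairs $\{w_1,w_2\}$ and $\{u_1,u_2\}$ of neighbours along the two cycles through $v$. Thus $G_v$ acts transitively on each of the two $G_v$-orbits of neighbours. Now $a$ reverses the arc $(v,u)$, so $a$ maps $v$ to $u$ and hence moves $v$ to a neighbour in the $G_v$-orbit containing $u$; combined with the transitivity of $G_v$ on that orbit, the elements of $H$ already reach \emph{both} neighbours in that orbit. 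Similarly $b$ reaches both neighbours in the orbit of $w$. Therefore the $H$-orbit of $v$ contains all four neighbours of $v$. Since $G_v \le H$ and $G_v$ is transitive on each neighbour-orbit, conjugating $a$ and $b$ by elements of $G_v$ (together with $a,b$ themselves) shows that from $v$ one can reach every neighbour of $v$ inside the $H$-orbit of $v$.

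The remaining step is a propagation argument: I would let $B$ denote the $H$-orbit of $v$ and show $B$ is closed under taking neighbours, so that connectedness of $\Gamma$ forces $B = \V(\Gamma)$. Suppose $v' \in B$, say $v' = v^h$ with $h \in H$, and let $v''$ be any neighbour of $v'$. Then $v'' = t^h$ for some neighbour $t$ of $v$; since $t$ lies in the $H$-orbit of $v$ by the previous paragraph, we have $t = v^{h'}$ for some $h' \in H$, and then $v'' = v^{h'h} \in B$. Hence $B$ contains all neighbours of each of its vertices, and connectedness of $\Gamma$ gives $B = \V(\Gamma)$, i.e.\ $H$ is vertex-transitive.

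\textbf{Main obstacle.} The only genuinely delicate point is the very first one: verifying that $a$ (resp.\ $b$) does not merely fix the edge $\{v,u\}$ setwise in a trivial way but genuinely sends $v$ to $u$. This is exactly what the hypothesis $a \in G_{\{v,u\}}\setminus G_{vu}$ guarantees, since an element stabilising the edge but not fixing both its endpoints must interchange them. Once this is pinned down, the rest is routine connectivity bookkeeping; no hard structural theory of $2$-arc-transitive groups is needed for this lemma.
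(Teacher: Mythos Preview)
Your proposal is correct and follows essentially the same route as the paper's proof: set $H=\langle G_v,a,b\rangle$, show $\Gamma(v)\subseteq v^H$, propagate via connectedness to get $H$ vertex-transitive, and conclude $H=G$. The only cosmetic difference is the last step: the paper writes $G=G_vH=H$ directly (valid in the finite and infinite case alike, since for any $g\in G$ there is $h\in H$ with $v^g=v^h$, whence $gh^{-1}\in G_v\le H$), which is a little cleaner than your orbit--stabiliser cardinality comparison.
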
 

\begin{proof}
Let $H:=\langle G_v, a,b\rangle$. Observe  that every edge incident to $v$ can be reversed by $a$ or $b$ or one of its $G_v$ conjugates, implying that
$\Gamma(v) \subseteq  v^H = \{v^h : h \in H\}$.
Now suppose $H$ is not transitive on $\V(\Gamma)$.
Then, because $\Gamma$ is connected, there exists an edge $\{x,y\}$ of $\Gamma$ with $x\in v^H,$ say $x = v^h$ where $h \in H$, while $y\not \in v^H.$  
Since $\Gamma(v) \subseteq  v^H,$  it follows that $\Gamma(x) = \Gamma(v^h) = \Gamma(v)^h \subseteq  (v^H)^h = v^H,$  and so $y\in v^H,$ 
a contradiction.  Thus $H$ is transitive on $\V(\Gamma)$, and therefore $G=G_vH = H$.
\end{proof}

\begin{lemma}
\label{lem:S4}
Suppose that $\Gamma$ is a connected tetravalent graph 
admitting two distinct LR-decompositions $\C$ and $\C'$.
Then the group $A:=\langle \Aut^+(\Gamma,\C), \Aut^+(\Gamma,C')\rangle$
acts transitively on the $2$-arcs of $\Gamma$, and
$A_v^{\Gamma(v)} \cong \Sym(4)$
for all $v\in V(\Gamma)$.
\end{lemma}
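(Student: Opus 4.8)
The plan is to reduce everything to the permutation groups induced on a single neighbourhood. Write $G := \Aut^+(\Gamma,\C)$ and $G' := \Aut^+(\Gamma,\C')$, so that $A = \langle G, G'\rangle$. By Remark~\ref{rem:LR}, for each vertex $v$ the group $G_v^{\Gamma(v)}$ is the Klein four-group $\langle x_v, y_v\rangle$, where $x_v$ and $y_v$ are the transpositions swapping the two $\C$-neighbours of $v$ lying on the green cycle, respectively on the red cycle. In particular $G_v^{\Gamma(v)}$ is precisely the intransitive Klein four-subgroup of $\Sym(\Gamma(v)) \cong \Sym(4)$ determined by the unordered partition $P_v$ of $\Gamma(v)$ into the two pairs $\{w_1,w_2\}$ and $\{u_1,u_2\}$; the analogous statement holds for $G'$ and the partition $P'_v$ coming from $\C'$. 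Since $G_v, G'_v \le A_v$, I would immediately get $\langle G_v^{\Gamma(v)}, G'^{\Gamma(v)}_v\rangle \le A_v^{\Gamma(v)}$.

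The key observation is that the family of local partitions $(P_v)_{v\in\V(\Gamma)}$ determines $\C$ uniquely. Indeed, given these partitions one recovers each cycle by tracing: on arriving at a vertex $w$ along an edge $\{v,w\}$, the next edge of the cycle is forced to be $\{w,w'\}$, where $w'$ is the unique neighbour of $w$ paired with $v$ in $P_w$. Hence, since $\C \ne \C'$, there must be at least one vertex $v_0$ with $P_{v_0} \ne P'_{v_0}$. At $v_0$ the subgroups $G_{v_0}^{\Gamma(v_0)}$ and $G'^{\Gamma(v_0)}_{v_0}$ are then two \emph{distinct} intransitive Klein four-subgroups of $\Sym(\Gamma(v_0))$, and a direct check in $\Sym(4)$ shows that any two of the three intransitive Klein four-subgroups generate the whole of $\Sym(4)$: choosing representatives whose supports overlap (always possible, as the two pairings differ) gives two transpositions generating a copy of $\Sym(3)$, and a further transposition from the second group then brings in the fourth point. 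Thus $A_{v_0}^{\Gamma(v_0)} = \Sym(\Gamma(v_0)) \cong \Sym(4)$.

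To finish, I would globalise: since $A$ contains the vertex-transitive group $G$, it is itself transitive on $\V(\Gamma)$, so the isomorphism type of $A_v^{\Gamma(v)}$ is independent of $v$, giving $A_v^{\Gamma(v)} \cong \Sym(4)$ for every vertex $v$. Finally, $\Sym(4)$ acts doubly transitively on $\Gamma(v)$, and a vertex-transitive group with doubly transitive local action is automatically transitive on $2$-arcs: given two $2$-arcs $(u_1,v_1,w_1)$ and $(u_2,v_2,w_2)$, first send $v_1$ to $v_2$ by some element of $A$, then apply an element of $A_{v_2}$ realising the appropriate permutation of $\Gamma(v_2)$ to match the two remaining vertices. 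This yields the transitivity of $A$ on $2$-arcs and completes the argument.

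The hard part will be the uniqueness-by-tracing step of the second paragraph: one must verify carefully that the local pairings $(P_v)$ genuinely reconstruct the global cycle decomposition, so that the hypothesis $\C \ne \C'$ can be converted into a bona fide local discrepancy $P_{v_0} \ne P'_{v_0}$ at some vertex. Everything else — the elementary $\Sym(4)$ generation computation and the standard passage from doubly transitive local action to $2$-arc-transitivity — should be routine.
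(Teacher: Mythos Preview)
Your proof is correct and follows essentially the same approach as the paper's. The only cosmetic difference is that where you spell out the tracing argument showing that the local partitions $(P_v)$ determine $\C$, the paper simply asserts directly that $\C \neq \C'$ yields a vertex $v$ and cycles $C \in \C$, $C' \in \C'$ through $v$ sharing exactly one edge at $v$; the remainder --- two distinct intransitive Klein four-subgroups generating $\Sym(4)$, then vertex-transitivity plus doubly transitive local action giving $2$-arc-transitivity --- is identical.
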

\begin{proof}
Let $G:=\Aut^+(\Gamma,\C)$ and $H:=\Aut^+(\Gamma,\C')$.
As $\C\not = \C'$, there exists a vertex $v$ of $\Gamma$ and cycles $C\in \C$ and $C'\in \C'$ passing through $v$, sharing precisely one of the edges incident with $v$,  $\{v,u\}$ say, and accordingly, $C(v) = \{u,w\}$ and $C'(v) = \{u,z\}$ for three different neighbours $u,w$ and $z$ of $v$ in~$\Gamma$. 
Now let $x$ be the fourth neighbour of $v$.
By our observations in Remark~\ref{rem:LR}, we know that $G_v^{\Gamma(v)} = \langle (u\, w), (z\, x) \rangle$ and $H_v^{\Gamma(v)} = \langle (u\,z), (w\,x) \rangle$.
Thus $\langle G_v^{\Gamma(v)}, H_v^{\Gamma(v)} \rangle \cong \Sym(4)$, and since 
$A_v^{\Gamma(v)}$ contains $H_v^{\Gamma(v)} $ and $G_v^{\Gamma(v)}$, we find that $A_v^{\Gamma(v)} \cong \Sym(4)$ also.
Finally, since $G$ is vertex-transitive, so is $A$. Hence the $2$-transitivity of $A_v^{\Gamma(v)}$ implies that $A$ is transitive on the set of $2$-arcs of $\Gamma$.
\end{proof}

\begin{lemma}
\label{lem:equiv}
If $G$ is an LR-group of automorphisms of a connected tetravalent graph $\Gamma$, then there exists a unique LR-decomposition $\C$ of $\Gamma$ such that $G\le \Aut^+(\Gamma,\C)$. Conversely, if $(\Gamma,\C)$ is an LR-structure, then $\Aut^+(\Gamma,\C)$ is a maximal LR-group of automorphisms of $\Gamma$. 
\end{lemma}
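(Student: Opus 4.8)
The plan is to pass back and forth between the combinatorial pairing of neighbours determined by the vertex action and the resulting cycle decomposition. For the first assertion, suppose $G$ is an LR-group. At each vertex $v$ the action $G_v^{\Gamma(v)}\cong\V_4$ has exactly two orbits, each of size $2$, so it partitions the four edges at $v$ into two pairs. The crucial preliminary step is to show that these two pairs lie in \emph{different} $G$-orbits on $\E(\Gamma)$. I would argue this by counting: since $G_v^{\Gamma(v)}\cong\V_4$ has two orbits on $\Gamma(v)$, vertex-transitivity gives exactly two $G$-orbits on arcs; comparing with the hypothesis that $G$ has exactly two orbits on edges forces each edge to be $G$-reversible and the two edge-orbits to correspond bijectively to the two arc-orbits. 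Consequently the two pairs at $v$, which are distinct $G_v$-orbits on out-arcs, must lie in distinct $G$-orbits on edges, for otherwise every arc at $v$ — and hence, by vertex-transitivity, every arc of $\Gamma$ — would lie in a single arc-orbit.

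With this in hand I would build $\C$ from the pairing: declare two edges \emph{linked} when they share a vertex and form one of its pairs, so that each edge is linked to exactly one other edge at each of its endpoints, and take the connected components of this relation, which are closed (or bi-infinite) walks, to be the members of $\C$. Because consecutive linked edges lie in a common $G_v$-orbit, every such component is monochromatic for the edge-colouring by the two $G$-orbits; combined with the preliminary step this shows no vertex meets a single component in all four of its edges, so each component induces a connected $2$-regular subgraph, i.e.\ a genuine cycle, and $\C$ is a partition of $\E(\Gamma)$ into cycles. Setting $\G$ and $\R$ to be the cycles coloured by the two edge-orbits, the preliminary step guarantees that each vertex meets exactly one cycle of each colour. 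Since $G$ preserves the pairing, it preserves $\C$, $\G$ and $\R$, so $G\le\Aut^+(\Gamma,\C)$, and condition (a) follows from vertex-transitivity of $G$. For condition (b) I would take, for a cycle $C$ through $v$ with $v$-pair $\{w_1,w_2\}$, an element $g\in G_v$ inducing $(w_1\,w_2)$ on $\Gamma(v)$: it fixes the two edges of the other cycle $C'$ at $v$ and swaps the two edges of $C$, so it stabilises $C$ (cycles being edge-disjoint) and reflects it; and walking along $C'$ while repeatedly using that the only elements of $\V_4$ fixing a given point fix its partner, one shows $g$ fixes $C'$ pointwise. Uniqueness of $\C$ is then immediate: any LR-decomposition $\C'$ with $G\le\Aut^+(\Gamma,\C')$ has $G_v^{\Gamma(v)}\le\Aut^+(\Gamma,\C')_v^{\Gamma(v)}$ with both groups isomorphic to $\V_4$ (the latter by Remark~\ref{rem:LR}), hence equal, so the two decompositions induce the same pairing at every vertex and therefore coincide.

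For the converse, given an LR-structure $(\Gamma,\C)$ I would check directly that $H:=\Aut^+(\Gamma,\C)$ is an LR-group: it is vertex-transitive by condition (a), has $H_v^{\Gamma(v)}=\la x,y\ra\cong\V_4$ acting intransitively on $\Gamma(v)$ by Remark~\ref{rem:LR}, and has exactly two orbits on $\E(\Gamma)$ — namely the edges of $\G$-cycles and those of $\R$-cycles — by Remark~\ref{rem:LRat}. Maximality then follows from the first half together with uniqueness: if $Y$ is any LR-group with $H\le Y$, then $H_v^{\Gamma(v)}\le Y_v^{\Gamma(v)}$ are both $\cong\V_4$, hence equal, so $Y$ induces the same pairing as $H$ and its associated decomposition is again $\C$, giving $Y\le\Aut^+(\Gamma,\C)=H$ and thus $Y=H$; in particular no LR-group strictly contains $H$. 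I expect the main obstacle to be the preliminary counting step — establishing that the two neighbour-pairs at each vertex lie in distinct edge-orbits — since this single fact is what simultaneously makes the traced walks into honest cycles, supplies the colour partition $\{\G,\R\}$, and drives both the uniqueness and maximality arguments; the verification of condition (b), requiring the propagation of fixed points along an entire cycle, is the other place where some care is needed.
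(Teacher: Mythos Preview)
Your proposal is correct and follows essentially the same route as the paper: build $\C$ from the two $G$-edge-orbits, verify conditions (a) and (b), then use the fact that the $G_v$-orbits on $\Gamma(v)$ are forced to coincide with those of $\Aut^+(\Gamma,\C')_v$ for uniqueness and for maximality in the converse. The one genuine difference is in your ``preliminary step'': the paper observes directly (via connectivity and vertex-transitivity) that some, hence every, vertex meets both edge-orbits, and then the two $G_v$-orbits of size~$2$ force a $2{+}2$ split; you instead count arc-orbits versus edge-orbits to deduce that every edge is $G$-reversible and that the two pairings at $v$ land in distinct edge-orbits. Both arguments are short and yield the same conclusion, after which your tracing of ``linked'' edges is equivalent to taking the connected components of the two $2$-regular spanning subgraphs $E_1,E_2$ used in the paper --- so there is no substantive divergence.
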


\begin{proof}
Suppose first that $G$ is an LR-group of automorphisms of $\Gamma$.
By definition, $G$ has two orbits on $\E(\Gamma)$, say $E_1$ and $E_2$. 
So now let $X_1$ and $X_2$ be the graphs with vertex-set $\V(\Gamma)$ and edge-sets $E_1$ and $E_2$, respectively. By connectivity of $\Gamma$, there exists a vertex $v\in\V(\Gamma)$
which is incident to an edge in $E_1$ as well as to one in $E_2$, and since $G$ is vertex-transitive,
it follows that this is true for every vertex of $\Gamma$. Furthermore, since
 $G_v^{\Gamma(v)}$ has two orbits of length $2$, it follows that every vertex of $v$ is incident
 to two edges in $E_1$ and two edges in $E_2$, implying that $X_1$ and $X_2$ are
 both spanning $2$-regular subgraphs of $\Gamma$. 
Let $\G$, respectively, $\R$, be the set consisting of the sets of edges of the cycles in $X_1$, respectively, $X_2$, and let $\C = \G \cup \R$.
 Then $\C$ is clearly a $G$-invariant decomposition of $\E(\Gamma)$
 into cycles, with each vertex of $\Gamma$ incident to one cycle in $\G$ and to one cycle
 in $\R$. Moreover, since $E_1$ and $E_2$ are orbits of $G$ on $\E(\Gamma)$, we see
 that $G$ is a subgroup of $\Aut^+(\Gamma,\C)$, with respect to the partition $\{\G,\R\}$ of~$\C$. 
In  particular, $\Aut^+(\Gamma,\C)$ acts transitively on $\V(\Gamma)$.
 
Next, to show that condition (2) of Definition~\ref{def:LR} holds, consider the cycles
$C\in \G$ and $D\in \R$ passing through a vertex $v$, 
and let $C(v) =  \{u_1, w_1\}$ and $D(v) =\{u_2, w_2\}$ denote the neighbourhoods of $v$ in these two cycles.
By the construction of $\G$ and $\R$, we see that $G_v^{\Gamma(v)}$ preserves
each of  $C(v)$ and $D(v)$ setwise, which together with the requirement that 
$G_v^{\Gamma(v)} \cong \V_4$ implies that $G_v^{\Gamma(v)} = \langle (u_1\, w_1), (u_2\, w_2) \rangle$. 
Now observe that an element of $G_v$ inducing the permutation $(u_1\, w_1)$ on $\Gamma(v)$ preserves
both $C$ and $D$ setwise, and moreover, as it fixes three consecutive vertices on the cycle
$D$, it fixes $D$ point-wise. Similarly, since this element fixes $v$ but swaps the two $C$-neighbours
of $v$, it reflects $C$ at $v$. By applying an analogous argument to the permutation
$ (u_2\, w_2)$, we see that the condition (2) of Definition~\ref{def:LR} is indeed fulfilled,
completing the proof that $(\Gamma,\C)$ is an LR-structure with $G\le \Aut^+(\Gamma,\C)$.


Suppose now that  $\C$ and $\C'$ are LR-decompositions of $\Gamma$ for which 
$G\le \Aut^+(\Gamma,\C)$ and $G \le  \Aut^+(\Gamma,C')$. If $\C \neq \C'$, then there is some vertex $v \in V(\Gamma)$ and some $C\in \C$, $C' \in \C'$ such that for $x,y,z\in \Gamma(v)$ we have $x,v,y \in C$ and $x,v,z \in C'$. Since $G \le \Aut^+(\Gamma,\C)$, the set $\{x,y\}$ is an orbit of $G_v$. On the other hand, since  $G \le \Aut^+(\Gamma,\C')$, the set $\{x,z\}$ is an orbit of $G_v$. This is a contradiction, and hence   $\C=\C'$.

Conversely, let $(\Gamma,\C)$ be an LR-structure and let $G=\Aut^+(\Gamma,\C)$. 
Then $G_v^{\Gamma(v)}$ is permutation isomorphic to the Klein $4$-group $\V_4$ in its intransitive action on $4$ points, by our observations in Remark~\ref{rem:LR}. On the other hand, $G$ is vertex-transitive (by definition), but $G$ is not edge-transitive as it preserves the sets $\G$ and $\R$. 
Also vertex-transitivity and the fact that $G_v$ induces $\V_4$ on $\Gamma(v)$ 
imply that $G$ is transitive on the set of edges contained in the cycles in $\G$, as well as
on the set of  edges contained in $\R$. In particular, $G$ has two orbits on $\E(\Gamma)$.
This shows that $G$ is an LR-group of automorphisms of $\Gamma$. 
Moreover, if $X$ is another LR-group of automorphisms of $\Gamma$ such that $G\le X$,
then from the first paragraph of this proof we know there exists a unique LR-decomposition $\C'$ of $\Gamma$ for which $X\le \Aut^+(\Gamma,\C')$. 
But then $G\le \Aut^+(\Gamma,\C')$, and again
by the uniqueness of the LR-decomposition $\C$ for which $G\le \Aut^+(\Gamma,\C)$, we find 
that $\C = \C'$. It follows that $X\le \Aut^+(\Gamma,\C) = G$, and hence $X=G$. This shows that
$G$ is a maximal LR-group of automorphisms of $\Gamma$, and completes the proof.
\end{proof}

\begin{corollary}
\label{cor:max}
If $(\Gamma,\C)$ is an LR-structure, and $\Aut^+(\Gamma,\C) < X \le \Aut(\Gamma)$, then $X$ acts transitively on the arcs of $\Gamma$.
\end{corollary}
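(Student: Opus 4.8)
The plan is to set $G := \Aut^+(\Gamma,\C)$ and to show that $X$ can have at most two orbits on the arc-set $\A(\Gamma)$, ruling out the two-orbit case by a contradiction. First I would invoke Lemma~\ref{lem:equiv} to recall that $G$ is an LR-group of automorphisms of $\Gamma$; then by Remark~\ref{rem:LRat}, $G$ has precisely two orbits on $\A(\Gamma)$, namely the set of arcs lying on cycles in $\G$ and the set of arcs lying on cycles in $\R$. Since $G \le X$, every $X$-orbit on $\A(\Gamma)$ is a union of $G$-orbits, and therefore $X$ has either one or two orbits on arcs.

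If $X$ has a single orbit on arcs, then $X$ is arc-transitive and there is nothing more to prove. So I would suppose, for a contradiction, that $X$ has two orbits on $\A(\Gamma)$. These must then coincide with the two $G$-orbits, so the set of arcs lying on cycles in $\G$ and the set of arcs lying on cycles in $\R$ are each $X$-invariant.

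The heart of the argument is to deduce from this that $X \le G$. Because the arcs on $\G$-cycles form an $X$-invariant set, $X$ preserves the corresponding set of edges lying on cycles in $\G$, and likewise it preserves the set of edges lying on cycles in $\R$. As observed in the proof of Lemma~\ref{lem:equiv}, the cycles in $\G$ are exactly the connected components of the spanning $2$-regular subgraph of $\Gamma$ carried by the $\G$-edges, and similarly for $\R$. Since every element of $X \le \Aut(\Gamma)$ permutes these components, $X$ preserves the family $\G$ and the family $\R$. Hence $X$ preserves $\C = \G \cup \R$ together with the partition $\{\G,\R\}$, so by definition $X \le \Aut^+(\Gamma,\C) = G$. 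This contradicts the hypothesis $G < X$, and therefore the two-orbit case cannot occur; consequently $X$ acts transitively on the arcs of $\Gamma$.

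I do not expect any serious obstacle here. The only points requiring care are the combinatorial fact that $G$ has \emph{exactly} two arc-orbits (which is what bounds the number of $X$-orbits by two), and the clean passage from ``$X$ fixes each arc-orbit setwise'' to ``$X$ preserves the cycle decomposition'', which rests on recovering the $\G$- and $\R$-cycles as the connected components of their respective edge-subgraphs.
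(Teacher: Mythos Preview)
Your proof is correct and, in fact, somewhat more direct than the paper's. Both arguments start the same way: set $G=\Aut^+(\Gamma,\C)$, note via Remark~\ref{rem:LRat} that $G$ has exactly two arc-orbits, and reduce to the case where $X$ also has two arc-orbits. From there the routes diverge. The paper argues locally: since $X_v^{\Gamma(v)}$ is then intransitive and contains $G_v^{\Gamma(v)}\cong\V_4$, it must equal $\V_4$; maximality of $G$ as an LR-group (Lemma~\ref{lem:equiv}) forces $X$ to be edge-transitive, and combining this with $G$ being arc-transitive on each of its edge-orbits yields arc-transitivity of $X$, a contradiction. Your argument is purely global and combinatorial: preserving both arc-orbits means preserving the two edge-classes, hence the two spanning $2$-regular subgraphs, hence their connected components $\G$ and $\R$, which lands $X$ back inside $\Aut^+(\Gamma,\C)=G$. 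Your route avoids any analysis of the local action and does not need the maximality clause of Lemma~\ref{lem:equiv}; the paper's route, on the other hand, makes explicit where the dichotomy ``LR-group versus edge-transitive'' sits, which ties in with the surrounding discussion. Either way the step you flagged---recovering the cycles of $\G$ and $\R$ as connected components---is sound, since Definition~\ref{def:LR} guarantees each vertex lies on exactly one cycle of each colour.
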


\begin{proof}
By Lemma~\ref{lem:equiv}, we see that $G:=\Aut^+(\Gamma,\C)$ is a maximal LR-group of automorphisms of $\Gamma$, and so $X$ is not an LR-group of automorphisms.
Now suppose that $X$ is not transitive on $\A(\Gamma)$. Then $X_v^{\Gamma(v)}$ is not transitive, and so $X_v^{\Gamma(v)} = G_v^{\Gamma(v)} \cong \V_4$, and since $X$ is not an LR-group, $X$ must be transitive on the edges of $\Gamma$. In view of Remark~\ref{rem:LRat}, it follows that $G$ acts transitively on the arcs underlying each of the two edge-orbits of $G$. Since these two $G$-edge-orbits are merged into a single $X$-edge-orbit, this implies that $X$ is arc-transitive on $\Gamma$ after all, a contradiction. 
\end{proof}

\begin{lemma}
\label{lem:index2}
Let $(\Gamma,\C)$ be an LR-structure, and let  $G:=\Aut^+(\Gamma,\C)$.
Then the following claims are equivalent$\,:$
\begin{itemize}
\item[{\rm (i)}]
$(\Gamma,\C)$ is self-dual$\,;$
\item[{\rm (ii)}]
there exists a group $X$ such that $G\le X\le \Aut(\Gamma)$ and $|X:G| = 2\,;$ 
\item[{\rm (iii)}]
there exists an arc-transitive but not $2$-arc-transitive group $X \le \Aut(\Gamma)$ that contains $G$ as a normal subgroup.
\end{itemize}
\end{lemma}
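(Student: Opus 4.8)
The plan is to prove the cyclic chain of implications (i) $\Rightarrow$ (ii) $\Rightarrow$ (iii) $\Rightarrow$ (i). Throughout I write $G = \Aut^+(\Gamma,\C)$ and use three facts already available: by Lemma~\ref{lem:equiv} the group $G$ is a \emph{maximal} LR-group, so in particular $G$ is not arc-transitive (its induced group on each neighbourhood $\Gamma(v)$ is the intransitive $\V_4$); by Remark~\ref{rem:LR2} the index $|\Aut(\Gamma,\C):G|$ is at most $2$, and $(\Gamma,\C)$ is self-dual precisely when some $g \in \Aut(\Gamma,\C)$ satisfies $\G^g = \R$; and by Corollary~\ref{cor:max} any group strictly between $G$ and $\Aut(\Gamma)$ is arc-transitive.

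For (i) $\Rightarrow$ (ii) I would simply take $X := \Aut(\Gamma,\C)$: self-duality makes $G$ a proper subgroup of $X$, and since the index is at most $2$ it is exactly $2$. For (ii) $\Rightarrow$ (iii) I would take the same $X$: here $G \trianglelefteq X$ because the index is $2$, and $X$ is arc-transitive by Corollary~\ref{cor:max}. To see that $X$ is not $2$-arc-transitive I would note that $X$ is vertex-transitive (it contains $G$), so $|X_v : G_v| = |X:G| = 2$; combined with $|G_v^{\Gamma(v)}| = 4$ this bounds $|X_v^{\Gamma(v)}| \le 8$, and a $2$-transitive group on four points has order at least $12$, so $X_v^{\Gamma(v)}$ cannot be $2$-transitive.

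The substantive direction is (iii) $\Rightarrow$ (i). Given an arc-transitive, non-$2$-arc-transitive group $X$ with $G \trianglelefteq X$, arc-transitivity of $X$ together with the non-arc-transitivity of $G$ gives $G < X$, so I may pick $g \in X \setminus G$. The key point I would prove is that $X$ fixes $\C$ setwise. For this I would use that for any $g \in \Aut(\Gamma)$ the image $\C^g$ is again an LR-decomposition (LR-structures are preserved by graph automorphisms), with $\Aut^+(\Gamma,\C^g) = g^{-1}\Aut^+(\Gamma,\C)\,g = g^{-1}Gg$. When $g \in X$, normality of $G$ gives $g^{-1}Gg = G$, so $\C^g$ is an LR-decomposition whose associated group is again $G$; the uniqueness clause of Lemma~\ref{lem:equiv} then forces $\C^g = \C$. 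Hence every $g \in X$ lies in $\Aut(\Gamma,\C)$, that is, $X \le \Aut(\Gamma,\C)$. Finally, the chosen $g \in X \setminus G$ lies in $\Aut(\Gamma,\C) \setminus \Aut^+(\Gamma,\C)$, and by Remark~\ref{rem:LR2} any such element interchanges $\G$ and $\R$; thus $(\Gamma,\C)$ is self-dual.

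I expect the main obstacle to be exactly the step ``$G \trianglelefteq X$'' $\Rightarrow$ ``$X \le \Aut(\Gamma,\C)$'' inside (iii) $\Rightarrow$ (i): the content is that normality of the LR-group $G$ in $X$ must be upgraded to invariance of the cycle decomposition $\C$ itself, and this is precisely where the uniqueness of the LR-decomposition attached to a maximal LR-group (Lemma~\ref{lem:equiv}) is essential. The remaining work is routine: verifying that $\C^g$ is genuinely an LR-decomposition, and recording the order estimate $|X_v^{\Gamma(v)}| \le 8$ that rules out $2$-transitivity in (ii) $\Rightarrow$ (iii).
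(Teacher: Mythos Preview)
Your proof is correct and follows essentially the same cyclic strategy (i) $\Rightarrow$ (ii) $\Rightarrow$ (iii) $\Rightarrow$ (i) as the paper, with only minor differences in execution. In (ii) $\Rightarrow$ (iii) the paper argues that the two $G_v$-orbits on $\Gamma(v)$ become blocks of imprimitivity for $X_v^{\Gamma(v)}$ (ruling out $2$-transitivity), whereas you use the equivalent cardinality bound $|X_v^{\Gamma(v)}|\le 8$; both are fine. In (iii) $\Rightarrow$ (i) the paper is slightly more direct: since $\G$ and $\R$ are precisely the two edge-orbits of the normal subgroup $G$, the overgroup $X$ must permute $\{\G,\R\}$, and arc-transitivity forces some element to swap them. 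Your route---showing $\C^g=\C$ for every $g\in X$ via the uniqueness clause of Lemma~\ref{lem:equiv}---reaches the same conclusion $X\le\Aut(\Gamma,\C)$, but through the machinery of that lemma rather than the elementary ``orbits of a normal subgroup are blocks'' principle. Neither argument needs the ``not $2$-arc-transitive'' hypothesis in (iii), which is there only to make (ii) $\Rightarrow$ (iii) go through.
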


\begin{proof}
Suppose first that (i) holds.
In this case, let $\{\G,\R\}$ be the partition of $\C$ as in Definition~\ref{def:LR}, and let $X=\Aut(\Gamma,\C)$. 
Then clearly the partition $\{\G,\R\}$ is $X$-invariant, and $G$ is the kernel of the induced action of $X$ on $\{\G,\R\}$,  
so $|X:G| = 2$, and hence (ii) holds.

Next, suppose that (ii) holds. Then by Corollary~\ref{cor:max}, $X$ is arc-transitive.  Moreover, as  both $X$ and $G$ are vertex-transitive, 
we find that $|X_v:G_v| = |X:G| = 2$, which implies that the $G_v$-orbits on $\Gamma(v)$ are blocks of imprimitivity for $X_v^{\Gamma(v)}$. 
In particular, $X_v^{\Gamma(v)}$ cannot be doubly transitive, and so $X$ is not $2$-arc-transitive. This proves that (iii) holds.

Finally, suppose that (iii) holds.
Since $G$ is normal in $X$ and since both $\G$ and $\R$ are orbits of the action of $G$ on $\E(\Gamma)$,
 the partition $\{\G,\R\}$ is $X$-invariant.
Since $X$ is arc-transitive, this implies that there exists $g\in X$ interchanging $\G$ and $\R$, and therefore $\C$ is self-dual, that is, (i) holds.
\end{proof}

\section{Amalgams and a reduction to trees}
\label{sec:tree}

We begin this section by recalling some basic facts about finite group amalgams of rank $2$
and their relationship with discrete groups acting arc-transitively on graphs.

Let $\Gamma$ be a connected $d$-valent graph, and let $G$ be a discrete arc-transitive group of automorphisms of~$\Gamma$. 
Also let $u$ and $v$ be two adjacent vertices in $\Gamma$, 
 let $L=G_v$, $R=G_{\{u,v \}}$, $B= G_{uv}$ be the stabilisers of the vertex $v$, edge $\{u,v\}$
and arc $(u,v)$, respectively. 
Then it is well known that the following hold (see, \cite{Pot} for example):
\begin{enumerate}
 \setlength{\itemsep}{2pt}
\item[(1)] $B=L \cap R$ is a finite group, 
\item[(2)] $|L:B| = d$ and $|R:B| = 2$, 
\item[(3)] if $K \le B$ and $K$ is normal in $L$ and in $R$, then $K=1$, and 
\item[(4)] $G=\langle L, R\rangle$. 
\end{enumerate}

Furthermore, if $a$ is an arbitrary element of $R {\setminus} B$ (so that $a$ reverses the arc $(u,v)$), 
then $\Gamma$ is isomorphic to the {\em Schreier coset graph} $\Cos(G;L,a)$ whose vertices are
the right cosets of $L$ in $G$ and whose edges are of the form $\{Lx,Lax\}$ for $x \in G$.  
This graph can also be denoted by $\Cos(G;L,B,R)$, with arcs being the cosets of $B$ in $G$
(with the initial vertex of an arc $Bx$ being $Lx$, and the reverse of $Bx$ being $Bax$).  
Moreover, via this isomorphism between $\Gamma$ and $\Cos(G;L,B,R)$, the action of $G$ on the 
right cosets of $L$ and $B$ by right multiplication corresponds to the original action of $G$ on the vertices and arcs of $\Gamma$.

Conversely, suppose that $B, L$, $R$ and $G$ are arbitrary groups satisfying conditions (1) to (4) above. 
In this case we say that $(L,B,R)$ is a {\em finite faithful amalgam of index $(d,2)$}, and that $G$ is a {\em completion} of the amalgam.
Correspondingly, $\Gamma = \Cos(G;L,B,R)$ is a connected regular $d$-valent graph, upon which $G$ acts 
by right multiplication as a group of automorphisms.  Moreover, this action is faithful, and transitive on the arcs of $\Gamma$. 
Note that the groups $L$, $R$ and $B$ play the roles of the vertex-stabiliser, edge-stabiliser and the arc-stabiliser of
a mutually incident vertex-edge-arc triple in $\Gamma$.
\smallskip

For a given finite faithful amalgam $(L,B,R)$, there exists a {\em universal completion} $\tG$, denoted by 
$L*_B R$ and called the free product of $L$ and $R$ amalgamated over $B$.  This has the property that for every
completion $G$ of the given amalgam, there exists an epimorphism $\pi \colon \tG \to G$ whose kernel intersects the groups $L$ and $R$
trivially (so that we may identify the subgroups $L, R,B \le \tG$ with their $\pi$-images in $G$). 
Furthermore, $\tGa:= \Cos(\tG;L,B,R)$ is an infinite $d$-valent graph, called {\em the universal cover of $\Gamma$}, 
with $\pi$ induces a covering projection $\tGa \to \Gamma$.

Let us now focus on the situation arising in Theorem~\ref{the:main} and prove the following statement:
%

%
\begin{lemma}
\label{eq:1}
Theorem~\ref{the:main} holds provided that it holds in the case where $\Gamma$ is a tetravalent tree.
\end{lemma}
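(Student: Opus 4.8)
The plan is to reduce Theorem~\ref{the:main} to its tree case by passing to the universal cover and transferring all the relevant properties up and down along the covering projection. The key observation is that Theorem~\ref{the:main} is a statement about the local structure of the pair $(\Gamma,A)$---the vertex-action $A_v^{\Gamma(v)}\cong\Sym(4)$, discreteness, $2$-arc-transitivity, transitivity on $5$-arcs, and the existence and conjugacy of LR-subgroups---and such local invariants are governed entirely by the amalgam $(L,B,R)$ attached to a mutually incident vertex--edge--arc triple. Since the tree $\tGa=\Cos(\tG;L,B,R)$ and the original graph $\Gamma=\Cos(G;L,B,R)$ share the same amalgam, I expect these invariants to match on both sides.

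First I would set up the amalgam. Let $A$ act on $\Gamma$ as in the hypothesis, and form the amalgam $(L,B,R)$ from the stabilisers as described in Section~\ref{sec:tree}, so that $\Gamma\cong\Cos(A;L,B,R)$. Let $\tG=L*_B R$ be the universal completion, with covering projection $\pi\colon\tGa\to\Gamma$ from the tetravalent tree $\tGa=\Cos(\tG;L,B,R)$, and identify $L,B,R$ with their images under $\pi$. The action of $\tG$ on $\tGa$ is again discrete (the stabilisers are conjugates of the finite group $L$), $2$-arc-transitive, and satisfies $\tG_{\tv}^{\tGa(\tv)}\cong\Sym(4)$, because the vertex-action is determined by $L$ acting on the cosets of $B$, which is unchanged by $\pi$. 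Likewise, the transitivity of $\tG$ on $s$-arcs for a given $s$ is equivalent to the transitivity of $A$ on $s$-arcs, since both are controlled by the same chain of subgroups in the amalgam; in particular $A$ is transitive on $5$-arcs if and only if $\tG$ is. The second paragraph of the tree case thus handles the $5$-arc assertion directly once transferred.

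Next I would handle the LR-subgroups. Suppose $A$ contains an LR-subgroup $G$; I want a corresponding LR-subgroup of $\tG$ acting on $\tGa$. The natural candidate is $\tG_G:=\pi^{-1}(G)$, or more precisely the subgroup of $\tG$ generated by the preimages of the vertex- and arc-reversing generators guaranteed by Lemma~\ref{lem:generate}. The point is that being an LR-group is detected locally: by Definition~\ref{def:LRgroup} it is the condition $G_v^{\Gamma(v)}\cong\V_4$ together with having two edge-orbits, and both conditions are visible in the amalgam. Since $\pi$ restricts to an isomorphism on vertex-stabilisers, the preimage inherits $\tG_{\tv}^{\tGa(\tv)}\cong\V_4$, and the two-orbit condition on edges lifts because the tree is connected and the covering is fibre-preserving. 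Conversely, an LR-subgroup of $\tG$ pushes down under $\pi$ to an LR-subgroup of $A$. This correspondence should also respect maximality (using Lemma~\ref{lem:equiv}) and conjugacy, so that the conjugacy statement for maximal LR-subgroups in $\tG$ implies the corresponding statement in $A$, and the index-$2$ arc-transitive overgroup $\tX$ of $\tG$ projects to an index-$2$ arc-transitive overgroup $X$ of $G$.

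The hard part will be verifying that the LR-property transfers cleanly in both directions, and in particular that $\pi^{-1}(G)$ is genuinely an LR-subgroup of $\tG$ rather than merely a group with the right local action. The subtlety is that an LR-group requires exactly two edge-orbits \emph{globally}, and while the local action $\V_4$ is immediate from the amalgam, I must argue that the two orbit-classes of edges do not merge after lifting to the tree; here connectedness of $\tGa$ and the fibre structure of $\pi$ are essential, together with Remark~\ref{rem:LRat} which ties the edge-orbits to the two cycle-families $\G$ and $\R$. A second delicate point is aligning the conjugacy conclusions: I must ensure that conjugacy of maximal LR-subgroups in $\tG$ descends correctly through $\pi$, which should follow because $\pi$ maps $\tG$-conjugacy to $A$-conjugacy and, by Lemma~\ref{lem:equiv}, maximal LR-subgroups correspond bijectively to LR-structures on either side. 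Once these transfer statements are established, the tree case of Theorem~\ref{the:main}, applied to $(\tGa,\tG)$, yields each assertion of the theorem for $(\Gamma,A)$.
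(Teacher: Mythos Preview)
Your proposal follows essentially the same route as the paper: pass to the universal completion $\tA=L*_B R$ acting on the tree $\T_4$, lift the LR-subgroup to its full preimage under $\pi$, and transfer each clause of Theorem~\ref{the:main} through the lattice correspondence between subgroups of $\tA$ containing $\ker\pi$ and subgroups of $A$. The one place where your sketch is vaguer than the paper is the ``hard part'' you flag---that the preimage really has exactly two edge-orbits. The paper settles this cleanly by observing that the action of $\pi^{-1}(G)$ on $\E(\T_4)$ is the action on right cosets of $R$ in $\tA$, which (since $\ker\pi\le\pi^{-1}(G)$) is permutation-isomorphic to the action of $G$ on right cosets of $R$ in $A$, i.e.\ on $\E(\Gamma)$; no appeal to Remark~\ref{rem:LRat} or fibre arguments is needed. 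Apart from this and some notational slippage (you use $\tG$ both for the universal completion and, at the end, seemingly for the lifted LR-group), your outline matches the paper's proof.
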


\begin{proof}
Suppose Theorem~\ref{the:main} holds for the case in which the graph in question is a tetravalent tree.  

Now let $\Gamma$ be any connected tetravalent graph admitting a discrete $2$-arc-transitive group $A$ of automorphisms 
with $A_v^{\Gamma(v)} \cong \Sym(4)$, and $A$ contains an LR-subgroup $G$.
Also let $\{u,v\}$ be an edge of $\Gamma$, and let $L:=A_v$, $R:=A_{\{u,v \}}$ and $B:= A_{uv}$. 
Then we may identify $\Gamma$ with $\Cos(A;L,B,R)$, and the action of $A$ on the vertices,
 arcs and edges of $\Gamma$ with the action of $A$ on cosets of $L$, $B$ and $R$, respectively.

Next, let $\tA:=L*_B R$ be the universal completion of the amalgam $(L,B,R)$,
let $\pi\colon \tA \to A$ be the corresponding epimorphism, and let
$\T_4:=\Cos(\tA;L,B,R)$. Then $\T_4$ is a $4$-valent tree upon which $\tA$ acts arc-transitively. Moreover, since the groups $L$, $B$ and $R$
are the stabiliser of an incident vertex, arc and edge (respectively), we see that in the actions of the group $\tA$ on $\T_4=\Cos(\tA;L,B,R)$) 
and the group $A$ on $\Gamma=\Cos(A;L,B,R)$), the vertex-stabiliser $\tA_\tv = L$ acts on
the neighbourhood $\tGa(\tv)$ in the same way as the vertex-stabiliser $A_v = L$ on the neighbourhood $\Gamma(v)$.
Thus $\tA_\tv^{\T_4(\tv)}$ is permutation isomorphic to $A_v^{\Gamma(v)} \cong \Sym(4)$. 
Also, because $A$ is $2$-arc-transitive and discrete, so is $\tA$, and $\tA_\tv^{\T_4(\tv)} \cong \Sym(4)$. 

Moreover, using the fact that $\pi$ induces a covering projection $\T_4 \to \Gamma$, one can also show that
$A$ acts $s$-arc transitively on $\Gamma$ if and only if $\tA$ acts $s$-arc transitively on $\T_4$. (This fact is well-known; for a proof see \cite[Lemma 3.2]{GiudiciSwartz}.)

Now let $\tG$ be the $\pi$-preimage of the LR-subgroup $G$ of $A$, and observe that the stabilisers 
of some incident vertex, arc and edge of $\tG$ are equal to the intersections $\tG\cap L$, $\tG\cap B$ and $\tG\cap R$, respectively.
By an argument similar to the one above, we see that the action of $\tG_\tv$ on
 $\T_4(\tv)$ is isomorphic to the action of 
$G_v$ on $\Gamma(v)$, and therefore $\tG_\tv^{\T_4(\tv)}$ is permutation isomorphic
to $G_v^{\Gamma(v)}\cong \V_4$. Then since the action of $\tG$ on the edge-set
 $\E(\T_4)$ is isomorphic to the action of $\tG$ on right cosets of the copy of $R$ in $\tA$ by right multiplication,
 which in turn is isomorphic to the action of $G$ on right cosets of $R$ in $A$ by right multiplication,
 we see that the number of edge-orbits of $\tG$ on $\T_4$ is equal to the number of 
 edge-orbits of $G$ on $\Gamma$. In particular, since $G$ is an LR-group of automorphisms, 
 it follows that so is $\tG$.
  
 Finally, observe that the epimorphism $\pi$ induces a bijection between the lattice of subgroups of $\tA$ that contain $\ker(\pi)$ and the lattice of subgroups of $A$. It follows that
 there exists a subgroup $\tX$ of $\tA$ containing $\tG$ as a subgroup of index $2$ if and only if
there exists a subgroup $X$ of $A$ containing $G$ as a subgroup of index $2$. 
Similarly, $\tG$ is a maximal LR-subgroup of $\tA$ if and only if $G$ is a maximal LR-subgroup of $A$.
 
 We may now use our assumption that Theorem~\ref{the:main} holds for trees
 to conclude that $\tA$ does not act $5$-arc-transitively on $\T_4$,
 and hence that $A$ does not act $5$-arc-transitively on $\Gamma$. 
Also if $G$ is a maximal LR-subgroup of $\Gamma$, then $\tG$ is 
 a maximal LR-subgroup of $\tA$, and hence by our assumption, there exists
 an arc-transitive subgroup $\tX$ of $\tA$ containing $\tG$ as a subgroup of index $2$.
Thus $X:=\pi(\tX)$ is an arc-transitive subgroup of $A$ containing $G$ as a subgroup of index $2$. 
Moreover, if $\Gamma$ admits another maximal LR-subgroup $H$ of $A$,
  then by the same argument as above, $\pi^{-1}(H)$ is a maximal LR-subgroup of $\tA$,
  and hence (by our assumption) is conjugate in $\tA$ to $\tG$. But then $H$ is conjugate
  to $G$ in $A$, and this completes the proof.
\end{proof}

\section{LR-subgroups of $2$-arc-transitive groups}
\label{sec:2AT}

The structure of discrete $2$-arc-transitive groups of automorphisms of the infinite tetravalent tree $\T_4$ is well understood, thanks to the classical work of Gardiner \cite{Gar} and Weiss \cite{weiss}, and a description of these groups in terms of generators and relators was given explicitly in \cite[Table 1]{Pot}.
It follows from this work that up to conjugacy in $\Aut(\T_4)$, there are exactly nine possibilities for the group $A$, with six of those satisfying the additional condition
$A_v^{\T_4(v)} \cong \Sym(4)$. 

These six groups, together with the corresponding stabilisers $(L,B,R)$ of mutually incident vertex-arc-edge triples in the graph,
are given in the second column of Table~\ref{tab:1}. Moreover, presentations for the subgroups $L$, $B$ and $R$ can be read conveniently from the presentation of the corresponding group $A$ in Table~\ref{tab:1}, by simply taking all the relators that involve only the generators of the subgroup in question.

Note that the names of the first four of the groups in Table~\ref{tab:1} are chosen so as to reflect
the isomorphism type of the vertex-stabiliser. For example, the group $A$ named $S_4$ has
vertex-stabiliser $L$ isomorphic to $\Sym(4)$, while for the groups named $C_3\rtimes S_4$ and
$C_3\rtimes S_4^*$, the vertex-stabiliser $L$ is isomorphic to a semidirect product $C_3\rtimes \Sym(4)$, 
and for the group named $S_3\times S_4$ the vertex-stabiliser is isomorphic to the direct product  $\Sym(3)\times \Sym(4)$. 
Each of these four possibilities for the group $A$ act on $\T_4$ either $2$-arc-transitively or $3$-arc-transitively (but not $4$-arc-transitively). The last two groups, named 4-AT
and 7-AT, act $4$-arc-transitively (but not $5$-arc-transitively) and $7$-arc-transitively, respectively.

\begin{table}[!htbp]
\begin{scriptsize}
\begin{tabular}{|c|c|c|}
\hline 
Name & $A=L*_B R$ & \begin{tabular}{c} \\ maximal LR-subgroup $G$\\ \\ \hline \\ $\hbox{N}_A(G)$ \\ \\\end{tabular}  \\
\hline\hline
$S_4$ & \begin{tabular}{c} \\
           $A = \la x,y,s,t,a \mid x^2, y^2, s^3, t^2, a^2, [x,y], s^t=s^{-1}, x^s=y,  y^s=xy, x^t=y, [s,a], [t,a]\ra$  \\ \\
           $L=\la x,y,s,t \ra ,\quad B=\la s,t \ra, \quad  R=\la s,t,a\ra$ \\ \\
         \end{tabular} 
& \begin{tabular}{c} $\langle xy,t,a,a^x\rangle$  \\ \\ \hline \\ $\langle x,y,t,a\rangle$ \end{tabular}
         \\
\hline
$C_3\rtimes S_4$ & \begin{tabular}{c} \\
           $A = \la x,y,c,d,t,a \mid x^2,y^2,c^3,d^3,t^2,a^2, [x,y], [c,d]$,\\
            $[c,x], [c,y], (tc)^2, (td)^2, x^d=y, y^d=xy, x^t=y,
            c^a=d, [a,t] \ra$  \\ \\
           $L=\la x,y,c,d,t \ra,\quad B=\la c,d,t \ra,\quad R=\la c,d,t,a\ra $
           \\ \\
         \end{tabular} 
& \begin{tabular}{c} $\langle xy,t,a,a^x\rangle$  \\ \\ \hline \\ $\langle x,y,t,a\rangle$ \end{tabular}
         \\
\hline
$C_3\rtimes S_4^*$ & \begin{tabular}{c} \\
           $A = \la x,y,c,d,t,a \mid x^2,y^2,c^3,d^3,t^2,a^2=t, [x,y], [c,d],$\\
            $[c,x], [c,y], (tc)^2, (td)^2, x^d=y, y^d=xy, x^t=y,
            c^a=d, d^a = c^{-1} \ra$  \\ \\
           $L=\la x,y,c,d,t \ra,\quad B=\la c,d,t \ra,\quad R=\la c,d,t,a\ra$ \\ \\
         \end{tabular} 
& \begin{tabular}{c} $\langle xy,t,a,a^x\rangle$  \\ \\ \hline \\ $\langle x,y,t,a\rangle$ \end{tabular}
         \\
 \hline 
 $S_3\times S_4$ & \begin{tabular}{c} \\
           $A = \la x,y,c,d,r,s,a \mid x^2,y^2,c^3,d^3,r^2,s^2,a^2, [x,y], [c,d], [r,s], [c,x], [c,y]$,\\
            $c^r = c^{-1}, [d,r], [c,s], d^s=d^{-1}, x^d=y, y^d=xy, x^s=y, [r,x], [r,y], c^a=d, s^a =r  \ra$  \\ \\
           $L=\la x,y,c,d,r,s \ra,\quad B=\la c,d,r,s \ra, \quad R=\la c,d,r,s,a\ra$ \\ \\
         \end{tabular}
& \begin{tabular}{c} $\langle xy, s,r,a,a^x\rangle$  \\ \\ \hline \\ $\langle x,y, s,r,a\rangle$ \end{tabular}
         \\
 \hline

 $4$-AT & \begin{tabular}{c} \\
           $A = \la t,c,d,e,x,y,a \mid t^2, c^3, d^3, e^3, x^2, y^2, a^2, [c,d], [c,e], [d,e] =c, [x,y], (cx)^2$,\\
                  $(dx)^2, [e,x], (cy)^2, [d,y],(ey)^2, c^t=d^{-1},$\\
                  $y(et)^2e^{-1}te^{-1}, (et)^4x, (ca)^2, d^a=e, x^a=y  \ra$
            \\ \\
           $L=\la t, x,y,c,d,e \ra,\quad B=\la x,y,c,d,e \ra,\quad R=\la x,y,c,d,e,a \ra$ \\ \\
         \end{tabular}
& \begin{tabular}{c} $\langle t, x, y, a, (ca)^{(cet)^{-1}}  \rangle$  \\ \\ \hline \\ 
   $\langle t, x, y, ete, a, (ca)^{(cet)^{-1}} \rangle$ 
 \end{tabular}
         \\
 \hline 
 $7$-AT & \begin{tabular}{c} \\
           $A = \la h,p,q,r,s,t,b,c,k,a  \mid h^4, p^3, q^3, r^3, s^3, t^3, b^3, c^2, k^2, a^2, kh^2,$\\
                  $[p,q], [p,r], [p,s], [p,t], [p,b],  [q,r], [q,s], [q,t],[q,b],$ \\
                  $[r,s], [r,t], [b,s], [s,t]=p, [b,r]=q, [t,b]=qrsp^{-1}$,\\
                  $[k,c], (tk)^2,(rk)^2, [p,k],(qk)^2,(sk)^2, [b,k],(tc)^2, [r,c], (pc)^2, (qc)^2, [s,c], (bc)^2$, \\
                  $[p,h], q^h=q^{-1}r, r^h=qr, s^h=pq^{-1}r^{-1}s^{-1}t^{-1}, t^h=p^{-1}qr^{-1}s^{-1}t,$\\
                   $(hbc)^2, (hb)^3, p^a=q^{-1}, r^a =s^{-1}, t^a=b^{-1}, [c,a], k^a=ck \ra$  \\ \\
           $L=\la h,p,q,r,s,t,b,c,k \ra,\quad B=\la p,q,r,s,t,b,c,k \ra,\quad R=\la p,q,r,s,t,b,c,k,a\ra$. \\ \\
         \end{tabular}
         & NONE             
         \\
 \hline        
\end{tabular}
${}$\\[6pt] 
\caption{Discrete $2$-arc-transitive groups $A\le \Aut(\T_4)$ with {$A_v^{\T_4(v)} \cong \Sym(4)$}  and their maximal LR-subgroups}
\label{tab:1}
\end{scriptsize}
\end{table}

We will now prove that Theorem~\ref{the:main} holds for the case where
$\Gamma$ is the tetravalent tree $\T_4$, which by Lemma~\ref{eq:1},
then proves Theorem~\ref{the:main} in full generality. In fact, we prove something slightly stronger: 

 \begin{lemma}
 \label{lem:detail}
 Let $A$ be a discrete $2$-arc-transitive group of automorphisms of $\T_4$.
 If $A$ is one of the groups in the first five rows of Table~\ref{tab:1}
 (that is, if $A$ has type $S_4$, $C_3\rtimes S_4$, $C_3\rtimes S_4^*$,  $S_3\times S_4$ 
 or 4-AT), then $A$ contains a maximal LR-subgroup $G$, unique up to conjugation in $A$, 
 given in the third column of Table~\ref{tab:1}. Also the normaliser
  $\N_A(G)$ in $A$ of the maximal LR-subgroup $G$ contains $G$ as a subgroup of index $2$, 
  and is shown in the third column of Table~\ref{tab:1} too. 
Finally, if $A$ has type {\em 7-AT} (as in the sixth row), then $A$ contains no LR-subgroup.
 \end{lemma}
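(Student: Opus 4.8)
The plan is to verify Lemma~\ref{lem:detail} by direct, case-by-case computation in each of the six amalgams listed in Table~\ref{tab:1}, exploiting the fact that $A = L *_B R$ is given by an explicit finite presentation. The key structural observation is the following. By Remark~\ref{rem:LR} and Lemma~\ref{lem:equiv}, an LR-subgroup $G$ of $A$ is exactly a vertex-transitive $G \le A$ with $G_v^{\Gamma(v)} \cong \V_4$ and two edge-orbits. Since $A_v^{\Gamma(v)} \cong \Sym(4)$ and $\V_4$ is a point-stabiliser-compatible subgroup, the index $|L : G_v|$ is forced to be $6$; moreover by Lemma~\ref{lem:generate} such a $G$ is generated by its vertex-stabiliser together with two arc-reversing elements, one in each $G_v$-orbit on $\Gamma(v)$. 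So the first step is to identify, inside each $L$, the candidate subgroups $L_0$ with $L_0^{\Gamma(v)} = \langle x,y\rangle \cong \V_4$ acting intransitively with two orbits of length $2$ on the four neighbours; for the $\Sym(4)$-type vertex action this $\V_4$ is unique up to conjugacy in $L$ (it is the unique $\V_4$ that is not regular on the four points), which is what will ultimately yield the uniqueness-up-to-conjugacy claim.

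Next, for each of the first five groups, I would take the candidate generator set for $G$ displayed in the third column of Table~\ref{tab:1} and verify three things. First, that $G_v = G \cap L$ induces precisely $\langle x, y\rangle \cong \V_4$ on $\Gamma(v)$, with the two length-$2$ orbits as required; this is a finite check using the action of $L$ on cosets of $B$. Second, that $G$ is vertex-transitive: here I would invoke Lemma~\ref{lem:generate}, checking that the listed generators include $G_v$ together with one edge-reversing element in each of the two $G_v$-orbits on $\Gamma(v)$ (the elements $a$ and its conjugate $a^x$, or $a$ and $a^{(cet)^{-1}}$-type elements in the $4$-AT case, play this role). Third, that $G$ has exactly \emph{two} orbits on $\E(\Gamma)$ rather than one; since $G$ has two arc-orbits by Remark~\ref{rem:LRat}, the content is that no element of $G$ swaps the two edge-orbits, i.e.\ that $G \cap R \le B$ in the appropriate sense, so that $G$ does not become arc-transitive. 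Combining these gives that $G$ is an LR-subgroup, and maximality follows from Lemma~\ref{lem:equiv} once we know $G_v^{\Gamma(v)} = \V_4$ exactly. To get the normaliser statement, I would check that the group $\N_A(G)$ displayed in the table normalises $G$ and strictly contains it with $|\N_A(G) : G| = 2$; by Corollary~\ref{cor:max} (or Lemma~\ref{lem:index2}) this overgroup is automatically arc-transitive, delivering the required index-$2$ arc-transitive $X$ in Theorem~\ref{the:main}.

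For the sixth group, of type $7$-AT, the claim is that \emph{no} LR-subgroup exists. The strategy here is to show that $L = \langle h,p,q,r,s,t,b,c,k\rangle$ contains no subgroup $L_0$ of the right index whose image $L_0^{\Gamma(v)}$ is the intransitive $\V_4 = \langle x,y\rangle$ with two orbits of length $2$; equivalently, that the action of $A_v$ on $\Gamma(v)$, while isomorphic to $\Sym(4)$, is the point action of a $7$-arc-transitive group whose vertex stabiliser has \emph{trivial} pointwise action beyond depth constraints that obstruct the existence of an LR-subgroup. Concretely, an LR-subgroup would produce a $G_v$ inducing $\V_4$ on the neighbours while fixing one whole neighbouring cycle pointwise (condition (b) of Definition~\ref{def:LR}); in the $7$-AT amalgam the arc-stabiliser $B$ is so large and so tightly entangled (witness the relations such as $[s,t]=p$, $[b,r]=q$, $[t,b]=qrsp^{-1}$ and $q^h = q^{-1}r$) that any subgroup inducing $\V_4$ on $\Gamma(v)$ cannot simultaneously have two edge-orbits, forcing arc-transitivity. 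I would confirm this by a finite computation enumerating the subgroups of $L$ of index $6$ (or the $\V_4$-subgroups of $L^{\Gamma(v)}$ and their preimages) and checking that every vertex-transitive completion they generate is arc-transitive.

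The main obstacle I anticipate is the $7$-AT case and, to a lesser extent, the $4$-AT case: in both, $B$ is large and the relations are genuinely non-abelian and interlocking, so the bookkeeping of which elements of $L$ fix a neighbouring cycle pointwise is delicate and not reducible to a quick hand computation. These are precisely the cases where I would rely on an explicit coset-enumeration or a computer-algebra check (for instance in \textsc{Magma} or \textsc{GAP}) to certify both the existence and uniqueness-up-to-conjugacy of $G$ in rows one through five, and the nonexistence in row six; the earlier rows ($S_4$, $C_3\rtimes S_4$, $C_3\rtimes S_4^*$) are small enough that the verification can plausibly be carried out by hand. The conceptual points — reduction to the vertex action, use of Lemma~\ref{lem:generate} for transitivity, and the two-edge-orbit versus arc-transitivity dichotomy — are uniform across all six cases; what varies is only the computational weight.
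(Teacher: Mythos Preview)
Your overall shape is right---case analysis on the six amalgams, locate the candidate vertex-stabiliser, build $G$ from it via Lemma~\ref{lem:generate}, and check the LR conditions computationally---and this is broadly what the paper does. But there are two concrete errors that would derail the argument as written.

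First, the claim that $|L:G_v|=6$ is false in every row except the first. What is true is that $|L^{\Gamma(v)}:G_v^{\Gamma(v)}|=6$, but $L$ is in general much larger than $\Sym(4)$ (it has a non-trivial kernel acting on $\Gamma(v)$), and $G_v$ need not contain that kernel. The correct constraint is that $G_v$ is a $2$-subgroup of $L$ whose image under $\rho\colon L\to\Sym(\Omega)$ is an intransitive copy of $\V_4$; the paper computes the set $\X$ of all such $2$-subgroups in each case. For type $S_3\times S_4$ there are four $L$-conjugacy classes in $\X$ (three of order $4$ and one of order $8$), and for types $4$-AT and $7$-AT the candidate $G_v$ is a dihedral group of order $8$, not a $\V_4$. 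So ``enumerate subgroups of $L$ of index $6$'' will not find the right objects.

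Second, and relatedly, knowing $G_v$ up to $L$-conjugacy does not by itself give $G$ up to $A$-conjugacy: two non-conjugate subgroups of $A$ of the same index could share the same vertex-stabiliser class. The paper closes this gap for the first five types by running \texttt{LowIndexSubgroups} on the finitely presented group $A$ to list \emph{all} subgroups of index $|A_v:X|$ containing some $X\in\X$, and then filtering for the LR property via orbit counts of $A_v$, $A_{vu}$, $A_{\{v,u\}}$ on $(A:T)$. Uniqueness up to conjugacy is then an output of that exhaustive search, not a consequence of the uniqueness of the intransitive $\V_4$ in $\Sym(4)$.

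For the $7$-AT case your first sentence (``$L$ contains no subgroup $L_0$ of the right index whose image is the intransitive $\V_4$'') is simply wrong: such a subgroup exists, namely $G_v=\langle pcq,(pcq)^h\rangle\cong\mathrm{D}_4$. Your second, vaguer suggestion---that any vertex-transitive completion is forced to be arc-transitive---is the right instinct, and is exactly what the paper proves, but the mechanism is different from what you sketch. The paper cannot use \texttt{LowIndexSubgroups} here (the index is too large), so instead it pins down $G_v$, then $G_{vu}$ and $G_{\{v,u\}}$ explicitly by computing inside the finite groups $L$ and $R$, invokes Lemma~\ref{lem:generate} to deduce $G=\langle G_v,a,a^h\rangle$, and then checks in \textsc{Magma} that this subgroup is all of $A$. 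The obstruction is not that no $G_v$ exists, but that the only possible $G$ collapses to $A$.
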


 
\begin{proof}
Let $\{v,u\}$ be an edge of $\Gamma:=\T_4$, and recall that
 we may assume that 
$(A,A_v,A_{vu},A_{\{v,u\}})$ is one of the quadruples $(A,L,B,R)$ given in Table~\ref{tab:1}, 
and that the vertex-set, arc-set and edge-set of $\Gamma$ can be identified with the right coset spaces
$(A\!:\!A_v)$, $(A\!:\!A_{vu})$ and $(A\!:\!A_{\{v,u\}})$, respectively, 
with the action of $A$ on vertices, arcs and edges of $\Gamma$ coinciding with the actions of $A$ on $(A\!:\!A_v)$, $(A\!:\!A_{vu})$ and $(A\!:\!A_{\{v,u\}})$ 
by right multiplication. 
We proceed using a combination of theoretical and computational methods. 
\smallskip

%
 %

Let $\Omega$ be the right coset space $(A_v\!:\!A_{vu})$, 
let $\rho \colon A_v \to \Sym(\Omega)$ be the natural action of $A_v$ on $\Omega$, 
and let $A_v^\Omega$ be the permutation group $\rho(A_v)$ induced by this action on $\Omega$.
Since $A_{vu}$ is the stabiliser of $u$ in the transitive action of $A_v$ on $\Gamma(v)$,
we may identify the elements of $\Gamma(v)$ with the elements of $\Omega$
in such a way that the coset action $\rho$ of $A_v$ on $\Omega$
corresponds to the action of $A_v$ on $\Gamma(v)$, and  that $A_v^{\Gamma(v)}$
corresponds to $A_v^\Omega$. In particular, $A_v^\Omega=\Sym(\Omega)$.

By the definition of  an LR-group, $G_v$ is a $2$-subgroup of $A_v$ such that $G_v^{\Gamma(v)}$ is permutation
isomorphic to the intransitive Klein $4$-group $\V_4$. With the above-described
identification of $\Gamma(v)$ with $\Omega$, we see that $G_v^\Omega:= \rho(G_v)$ is one of the three intransitive subgroups 
of $\Sym(\Omega)$ isomorphic to $\V_4$, and hence that $G_v$ belongs to the set
$\X$ of all $2$-subgroups of $A_v$ with $\rho(X) \cong \V_4$. 

Because $A_v$ is a finite group (with the presentation given in Table~\ref{tab:1}), one can 
use a computer algebra system such as {\sc Magma} \cite{magma} to determine the set $\X$ for each of the six possible types of the group $A$. 

If $A$ has type $S_4$, $C_3\rtimes S_4$ or
$C_3\rtimes S_4^*$, then the set $\X$ consists of a single conjugacy class of subgroups of $A_v$, 
with the class representative being the Klein $4$-subgroup generated by $xy$ and $t$.
If $A$ has type $S_3\times S_4$, then $\X$ is the disjoint union of four $A_v$-conjugacy classes,
the representatives of which are 
$\langle x y, s \rangle$,
$\langle x y, sr \rangle$,
$\langle rx y, s \rangle$ and
$\langle x y, r,s \rangle$, 
with the first three isomorphic to $\V_4$, and the fourth being an elementary abelian group of order $8$.
If $A$ has type 4-AT, then $\X$ consists of a unique $A_v$-conjugacy class represented by
the group $\langle x, y,  t \rangle$, isomorphic to the dihedral group ${\rm D}_4$ of order $8$.
Finally, if $A$ is of type 7-AT, then $\X$ is the conjugacy class in $A_v$ of 
$ \langle pcq, (pcq)^h\rangle$, 
which is also isomorphic to ${\rm D}_4$.

Next, since $G$ acts transitively on $\V(\Gamma)$, we see that $A =G A_v$, and so 
$G$ has finite index in $A$, indeed $|A : G| = |A_v : G_v|$.
Hence the group $G$ is a member of the set
$$\cT = \bigcup_{X\in \X} \{\, T \,: \,X \le T\le A \,\hbox{ with } \,|A:T| = |A_v:X|\,\}.$$
We computed this set $\cT$ for each of the first five types of the group $A$, 
using the {\tt LowIndexSubgroups} routine in {\sc Magma}. 
(The computation in these cases takes only a few seconds on an average laptop.) 
We were unable to do the same in the case where $A$ has type 7-AT, however, due to the computational complexity of the 
{\tt LowIndexSubgroups} algorithm, we will explain how we dealt with this case later.


Of course the set $\cT$ might contain some subgroups of $A$ that are
not vertex-transitive, or are vertex-transitive but act half-arc-transitively on $\Gamma$, 
and so we can restrict our attention to the subset $\cT^*$ consisting of all $T \in \cT$ that are LR-groups of automorphisms of $\Gamma$. 

In determining this set $\cT^*$, we observe that an LR-group $T\in \cT^*$ has two orbits on $\E(\Gamma)$ and $\A(\Gamma)$,
and hence that each of $A_{\{v,u\}}$ and $A_{vu}$ has two orbits in its action on the coset space $(A\!:\!T)$ by right multiplication. 
Moreover, as an LR-group $T$ is vertex-transitive, we see that $A_v$ is transitive in its action on $(A\!:\!T)$, and hence a group  $T\in \cT$ belongs to $\cT^*$ if and only if 
 the stabilisers $A_{\{v,u\}}$, $A_{vu}$ and $A_v$ have two orbits, two orbits and one orbit, respectively, in the their actions on $(A\!:\!T)$. 

Next, because  every group $T\in \cT$   has finite (and relatively small) index in $A$, it is easy check each group $T \in \cT$  against the latter 
 condition computationally, and find that, up to conjugacy in $A$, the set $\cT^*$ consists of a unique
 LR-group in all cases, except when $A$ has type $S_3\times S_4$ or possibly when it has type 7-AT (in which case the above approach is computationally too difficult). 
 
In all but those two exceptional cases, the LR-groups that satisfy the condition are precisely the groups listed in the third column of Table~\ref{tab:1}.
 If $A$ has type $S_3\times S_4$, then $\cT^*$ consists of two conjugacy classes, with representatives $\langle xy, sr, a, a^x\rangle$ and  $\langle xy, a, a^x, s,r\rangle$, 
 and as the former is clearly a subgroup of the latter, we find that (again) $A$ contains a unique maximal LR-subgroup up to conjugacy. 
  
Also, a direct computation shows that in each case (except possibly when $A$ is of type 7-AT), the normaliser $\N_A(G)$ in the unique maximal LR-subgroup $G$ of $A$ is
  as stated in Table~\ref{tab:1} and contains $G$ as a subgroup of index $2$. This proves the
  statement of the lemma in all cases except when $A$ is of type 7-AT.
\\[-8pt]

Finally, let us assume that $A$ is of type 7-AT, and hence that
 %
 $G_v =  \langle \alpha, \alpha^h\rangle \cong {\rm D}_4$ where $\alpha=pcq$. 
Recall that $\rho$ is the action of $A_v$ on the set of right cosets of $A_{vu}$ ($=B$), and that  $\alpha$ fixes $u$. Let $x$ be the other vertex in $\Gamma(v)$  fixed by $\alpha$, and let $w$ and $z$ be vertices in $\Gamma(v)$ that are interchanged by $\alpha$. 
Now since $|G_v|=|G_{vu}|\,|u^{G_v}| = 2 |G_{vu}|$, we find that $|G_{vu}| = 4$.
Also we see that $\rho(h)$ is the double transposition $(u\,w)(x\,z)$, and
hence $\alpha\in G_{vu}$ and $\alpha^h\in G_{vw}$.
Moreover,  one can see that the element $(\alpha^h\alpha)^2$  of order $2$
lies in the kernel of 
 $\rho$,
  and it follows 
that $(\alpha^h\alpha)^2$ fixes the neighbourhood $\Gamma(v)$ pointwise, 
and therefore $\alpha \not = (\alpha^h\alpha)^2 \not = \alpha^h$. 
Also because $|G_{vu}| = |G_{vw}| = 4$, we find  that
$G_{vu} =  \langle (\alpha^h\alpha)^2,\alpha\rangle$
and $G_{vw} = \langle (\alpha^h\alpha)^2,\alpha^h\rangle$.

We will now determine the edge-stabiliser $G_{\{v,u\}}$. To do this, we first observe 
that $(\alpha^h\alpha)^2 = q^2 k$, which can be verified easily since it involves only elements in the finite group $G_v$, 
and from this we find that $G_{vu} = \langle pcq, q^2k\rangle$. Then since the elements $pcq$ and $q^2k$ 
involve only generators of the group $R = A_{\{v,u\}}$, we deduce that the edge-stabiliser $G_{\{v,u\}}$ is precisely the
normaliser $\N_{A_{\{v,u\}}}(G_{vu})$, and this gives $G_{\{v,u \}} = \langle G_{vu}, a\rangle \cong {\rm D}_4$.


Recall that $\Gamma(v)= \{u,x,w,z\}$ and that $\rho(G_v)=\langle  (w\,z) , (u\,x) \rangle$. This means $u$ and $w$ are in distinct  orbits of $G_v$ on $\Gamma(v)$. Thus, by Lemma~\ref{lem:generate}, there exist $\mu \in G_{\{v,u\}} \setminus G_{vu}$ 
 and $\nu\in G_{\{v,w\}} \setminus G_{vw}$ such that $G = \langle G_v, \mu, \nu\rangle$. Now $\mu= \mu' a$ for some $\mu' \in G_{vu}$ and since $G_{\{ v,w\}}=(G_{\{v,u\}})^h$,  also $\nu = \nu' a^h$ for some $\nu' \in G_{vw}$. Hence $G = \langle G_v, a, a^h \rangle = \langle (pcq), (pcq)^h, a, a^h \rangle$. A quick
  computation in {\sc Magma} shows that $G=A$, which is a contradiction to our assumption that $G$ is an LR-group. This shows  that if $A$ is of type 7-AT (or equivalently, is $7$-arc-transitive), then $A$ contains no LR-groups.
\end{proof}

\section{Proof of Theorem~\ref{the:solution} and Corollary~\ref{cor:cor}}
\label{sec:proof}

We conclude this paper by deducing Theorem~\ref{the:solution} from what we found 
in the previous sections and then proving Corollary~\ref{cor:cor}.
\smallskip

Let us begin by answering Question~\ref{q}, namely that if $\C$ and $\C'$ are distinct LR-decompositions of a finite tetravalent graph $\Gamma$,
does there exist $g\in \Aut(\Gamma)$ such that $\C^g = \C' {\hskip 1pt}?$ 

Let $\Gamma$, $\C$ and $\C'$ be as above and let $G:=\Aut^+(\Gamma,\C)$ and $G^*:=\Aut^+(\Gamma,\C')$. Then, by Lemma~\ref{lem:S4},
the group $A:=\langle G, G^*\rangle$ acts $2$-arc-transitively on $\Gamma$ with $A_v^{\Gamma(v)} \cong \Sym(4)$. Moreover,
by Lemma~\ref{lem:equiv}, the groups $G$ and $G^*$ are maximal LR-subgroups of $A$.
By Theorem~\ref{the:main},  $G$ and $G^*$ are conjugate within $A$, so 
$G^* = G^g$ for some $g\in A$. It follows that $\C^g$ is an LR-decomposition of $\Gamma$
with $G^* = \Aut^*(\Gamma,\C^g)$, and now Lemma~\ref{lem:equiv} implies that $\C^g=\C'$.
Hence the answer to Question~\ref{q} is affirmative. 
\smallskip

Finally, we verify Conjecture~\ref{c}, namely that if $(\Gamma,\C)$ is a finite LR-structure for which $\Aut^+(\Gamma, \C)$ is a proper subgroup of $\Aut(\Gamma)$, then $(\Gamma,\C)$ is self-dual.

Suppose that $(\Gamma,\C)$ is a counterexample to Conjecture~\ref{c}.
  Then
 $\Aut(\Gamma,\C) = \Aut^+(\Gamma,\C) = G$, and 
  there exists an element
 $h\in \Aut(\Gamma) \setminus G$. Now $\C^h$ is an LR-decomposition
 of $\Gamma$ distinct from $\C$, and by Lemmas~\ref{lem:S4} and ~\ref{lem:equiv},
 the group $A:=\langle G, G^h\rangle$ acts $2$-arc-transitively on $\Gamma$ with $A_v^{\Gamma(v)}\cong \Sym(4)$ and with $G$ being a maximal LR-subgroup of $A$.
 By Theorem~\ref{the:main} there exists a group $N$ with $G\le N \le A$
 and $|N:G| = 2$, and so by Lemma~\ref{lem:index2}, $(\Gamma,\C)$ is self-dual,  a contradiction. This shows that $(\Gamma,\C)$ is not a counterexample.

We now turn to Corollary~\ref{cor:cor}. Suppose that $(\Gamma,\mathcal C)$ is an LR-structure such that $ \{ \G , \R\}$ is the (unique) partition of $\C$ into cycles satisfying Definition~\ref{def:LR}, and that  $G=\Aut^+(\Gamma,\mathcal C)$ is contained in an arc-transitive subgroup $Y$. If $G$ is normal in $Y$, then since $\{\G,\R\}$ is a partition of $E(\Gamma)$, there exists $g\in Y$ such that $\G^g = \R$ and $\R^g = \G$ and hence $(\Gamma,\C)$ is self-dual. If $G$ is not normal in $Y$, then as above, for $y\in Y \setminus N_Y(G)$ the group $\langle G, G^y \rangle$ is a $2$-arc-transitive discrete subgroup of $\Aut(\Gamma)$ and then Theorem~\ref{the:solution} shows that there is an arc-transitive subgroup $X$ of $\Aut(\Gamma)$ such that $G$ is a normal subgroup of $X$ of index $2$, and hence $(\Gamma,\C)$ is self-dual.

\end{document}